\documentclass[reqno]{amsart}%
\usepackage{amsmath}
\usepackage{amsfonts}
\usepackage{amssymb}
\usepackage{graphicx}%
\usepackage{hyperref}

\newtheorem{thm}{Theorem}[section]
\newtheorem{cor}[thm]{Corollary}
\newtheorem{lem}[thm]{Lemma}
\newtheorem{prop}[thm]{Proposition}

\theoremstyle{definition}
\newtheorem{defn}{Definition}[section]
\theoremstyle{Conjecture}

\theoremstyle{remark}
\newtheorem{rem}{Remark}[section]

\newcommand{\be}{\begin{equation}}
\newcommand{\ee}{\end{equation}}
\newcommand{\bea}{\begin{eqnarray}}
\newcommand{\eea}{\end{eqnarray}}
\newcommand{\ben}{\begin{eqnarray*}}
\newcommand{\een}{\end{eqnarray*}}
\newcommand{\bet}{\begin{equation}
\begin{split}}
\newcommand{\eet}{\end{split}
\end{equation}}

\begin{document}
\title[On some properties of squeezing functions of bounded domains]{On some properties of squeezing functions of bounded domains}
\date{}
\subjclass[2010]{Primary 32H02, 32F45}
\thanks{\emph{Key words}. squeezing function, extremal map, holomorphic homogeneous regular domain}
\author{Fusheng Deng}
\address{F. Deng: School of Mathematical Sciences, Graduate University of
Chinese Academy of Sciences, Beijing 100049, China}
\email{fshdeng@gucas.ac.cn}
\author{Qian Guan}
\address{Q. Guan: Institute of Mathematics, Academy of Mathematics and Systems Science, Chinese Academy of Sciences, Beijing
100190, China}
\email{guanqian@amss.ac.cn}
\author{Liyou Zhang}
\address{L. Zhang: School of Mathematical Sciences, Capital Normal University, Beijing 100048, China}
\email{zhangly@mail.cnu.edu.cn}
\begin{abstract}
The main purpose of the present paper is to introduce the notion of squeezing functions of bounded domains and study some properties of them. The relation to geometric and analytic structures of bounded domains will be investigated. Existence of related extremal maps and continuity of squeezing functions are proved. Holomorphic homogeneous regular domains are exactly domains whose squeezing functions have positive lower bounds. Completeness of certain intrinsic metrics and pseudoconvexity of holomorphic homogeneous regular domains are proved by alternative method. In dimension one case, we get a neat description of boundary behavior of squeezing functions of finitely connected planar domains. This leads to a necessary and sufficient conditions for  a finitely connected planar domain to be a holomorphic homogeneous regular domain. Consequently, we can recover some important results in complex analysis. For annuli, we obtain some interesting properties of their squeezing functions. We finally exhibit some examples of bounded domains whose squeezing functions can be given explicitly.
\end{abstract}
\maketitle

\section{introduction}\label{sec:introduction}

Bounded domains are elementary objects of study in complex analysis. To study complex and geometric structures of bounded domains, one may consider holomorphic maps form bounded domains  to some standard domains such as balls and vice visa. The basic idea goes back to Carath\'{e}odory, and a typical example is the definition of Carath\'{e}odory metric and Kobayashi metric. Holomorphic maps from bounded  domains to the unit ball with certain extremal properties are called Carath\'{e}odory maps, which can be explicitly given for some special domains such as bounded symmetric domains  \cite{Kubota0}\cite{Kubota1} and ellipsoids \cite{Ma}.  Recently, by considering embeddings of general bounded domains into the unit ball,  a new concept of holomorphic homogeneous regular domains was introduced in \cite{Liu}. Holomorphic homogeneous regular domains are generalizations of Teichm\"uller spaces,  and they admit some nice geometric and analytic properties (see \cite{Liu}\cite{Yeung}).

Motivated by the works mentioned above, especially that in \cite{Liu}, we introduce the notion of squeezing functions defined on general bounded domains as follows:
\begin{defn}\label{def:squeezing function}
Let $D$ be a bounded domain in $\mathbb{C}^n$. For $p\in D$ and an (open) holomorphic embedding $f: D\rightarrow B^n$ with $f(p)=0$, we define
$$s_D(p , f)= \sup\{r|B^n(0,r)\subset f(D) \},$$
and the squeezing number $s_D(p)$ of $D$ at $p$ is defined as
$$s_D(p)= \sup_f\{s_D(p , f) \},$$
where the supremum is taken over all holomorphic embeddings $f: D\rightarrow B^n$ with $f(p)=0$, $B^n$ is the  unit ball in $\mathbb{C}^n$ and $B^n(0 , r)$ is the ball in $\mathbb{C}^n$ with center $0$ and radius $r$. We call $s_D$ the \emph{squeezing function} on $D$.
\end{defn}

By  definition, it is clear that  squeezing functions are invariant under biholomorphic transformations, so they can be viewed as a kind of holomorphic invariants of bounded domains. The main purpose of the present paper is to investigate some properties of squeezing functions and their relations with geometric and analytic structures of bounded domains.

Squeezing functions  are always positive and bounded above by 1. It is interesting to estimate their lower and upper bounds, which are numerical  holomorphic invariants of bounded domains, by the holomorphic invariance of squeezing functions.   Holomorphic homogeneous regular domains defined in \cite{Liu} are exactly bounded domains whose squeezing functions admit  positive lower bounds. They contain some interesting objects such as bounded homogeneous domains, Teichm\"uller spaces, bounded domains covering compact K\"{a}hler manifolds and strictly convex domains with $C^2$-boundary \cite{Yeung}.

It is easy to see that the squeezing function of the unit ball $B^n$ is constant with value 1. A natural question is whether the squeezing function of a bounded domain $D$ can attain the value 1 at some point $x$ in $D$ if $D$ is not holomorphic equivalent to $B^n$? This question is related to the existence of an extremal map  which realizes the supremum $s_D(x)$, i.e., the existence of a holomorphic embedding $f:D\rightarrow B^n$ such that $f(x)=0$ and $B^n(0, s_D(x))\subset f(D).$  We will prove the existence of extremal maps, by using a higher dimensional  generalization of  Hurwitz theorem that will be proved as well in the present article. As a consequence, $S_D$ attains the value 1 in $D$ if and only if $D$ is holomorphic equivalent to the unit ball. On the other hand, as we will see, there exist domains whose squeezing functions have supremum 1, but they are not holomorphic equivalent to the unit ball.

An elementary property of regularity of squeezing functions is their continuity. It can be proved by using the decreasing property of Kobayashi metrics. From the continuity property, one can see that a bounded domain is a holomorphic homogeneous regular domain if it covers a compact complex manifold.

Properties of squeezing functions can reflect some geometric and analytic properties of bounded domains. Boundary behavior of squeezing functions implies certain boundary estimate of Carath\'eodory metric, which implies completeness of the metric in some special cases. For a bounded domain whose squeezing function admits a positive lower bound, i.e.  a holomorphic homogeneous regular domain, it is known that the intrinsic metrics -- the Carath\'eodory metric, the Kobayashi metric and the Bergman metric -- on it are equivalent \cite{Liu}, and they are all complete \cite{Yeung}. We will prove the completeness of these metrics by alternative method based on Lu's result on comparsion of  Bergman metric and  Carath\'{e}odory metric \cite{Lu}. A result in several complex variables states that  completeness of the  Carath\'eodory metric of a domain implies its  pseudoconvexity (see e.g. \cite{Pflug}),  hence a holomorphic homogeneous regular domain must be a pseudoconvex domain.

Squeezing functions of planar domains have nice properties. For finitely connected planar domains, we get a neat description of the boundary behavior of their squeezing functions. As a result, we get the necessary and sufficient condition for such a domain to be a holomorphic homogeneous regular domain. Surprisingly, the squeezing function $s_D$ of a bounded planar domain $D$ with smooth boundary admits the boundary behavior

\begin{equation}\label{equ:boundary behavior}
\lim_{z\rightarrow \partial D}s_D(z)=1.
\end{equation}

By the continuity of squeezing functions, equality \eqref{equ:boundary behavior} implies that all smoothly bounded planar domains are holomorphic homogeneous regular domains. As a result, we can recover some important results about planar domains, e.g., the three intrinsic metrics mentioned above on a  bounded planar domain with smooth boundary are all complete, and they are equivalent, and a smoothly bounded planar domain must be hyperconvex, i.e., it admits a bounded exhaustive subharmonic  function.  In particular, equality \eqref{equ:boundary behavior} also implies that $s_D$ can be extended continuously to $\overline{D}$ for a planar domain $D$ with smooth boundary. We don't know whether this is true in general case, i.e.,  whether $s_D$ can be extended continuously to $\overline{D}$ for all bounded domains $D\subset \mathbb{C}^n$ with smooth boundary.

It is clear that the product of two holomorphic homogeneous regular domains is still a holomorphic homogeneous regular domain, so smoothly bounded  planar domains and their products provide a class of holomorphic homogeneous regular domains which are  generally not contained in the list of holomorphic homogeneous regular domains  mentioned above. As remarked in \cite{Liu}, it may be interesting to investigate whether the Kobayashi metric and the Carath\'{e}odory  metric on a Teichm\"uller space coincide or not; on the other hand, the Kobayashi metric and the Carath\'{e}odory  metric on general holomorphic homogeneous regular domains constructed here don't coincide.

The simplest nontrivial  smoothly bounded planar domains are annuli. However, even in this special case, squeezing functions admit nontrivial  properties. With certain investigation, we conjecture that the conformal structure of an annulus is characterized by the exact lower bound of its squeezing function.

The squeezing functions can be given explicitly for classical bounded symmetric domains. In this special case, we see that the extremal maps for squeezing functions defined as above can be given by the Carath\'{e}odory maps (see \S\ref{sec:explicit form of sf} for exact definition). However, this does not hold for general domains such as annuli. In fact, the Carath\'{e}odory maps (often called Ahlfors maps  for planar domains) of a bounded planar domain can not even be injective if the domain is not simply connected (see e.g. \cite{Fisher}). It seems that the obstruction for the coincidence of the two types of extremal maps comes from topology. Therefore, we conjecture that the extremal maps of a contractible domain are given by Carath\'{e}odory maps.

The rest of this article is organized as follows. In \S\ref{sec:existence of extremal function}, we generalize the Hurwitz theorem from one complex variable analysis to several complex variables,  and use this generalization to establish the existence of extremal maps that are defined as above; In \S\ref{sec:continuity}, we prove the continuity of squeezing functions of general bounded domains; In \S\ref{sec:relations with metrics}, we give a boundary estimate of Carath\'eodory metrics in term of boundary behavior of squeezing functions, and prove the completeness of certain intrinsic metrics on holomorphic homogeneous regular domains , as a corollary, we get the pseudoconvexity of these domains; In \S\ref{sec:planar domain case}, we  study squeezing functions on planar domains, and prove equality \eqref{equ:boundary behavior} of smoothly bounded planar domains, we also construct a class of planar holomorphic homogeneous regular domains which are infinitely connected;  In  \S\ref{sec:annuli}, we focus on squeezing functions on annuli, and in the final \S\ref{sec:explicit form of sf}, we give some examples of bounded domains whose squeezing functions can be given explicitly.

\vspace{.1in} {\em Acknowledgements}. The authors would like to thank Xiangyu Zhou,
the PhD advisor of the first two authors, for invaluable instructions and discussions. They are also grateful to Boyong Chen, Kefeng Liu, Peter Pflug, and Sai-Kee Yeung for helpful discussions. The authors are partially supported by NSFC grants (10901152 and 11001148).

\section{Generalized Hurwitz Theorem and the existence of extremal functions}\label{sec:existence of extremal function}

The main aim of this section is to establish the existence of extremal maps related to squeezing functions, i.e., the following

\begin{thm}\label{thm:existence of ext}
Let $D$ be a bounded domain in $\mathbb{C}^n$, then for any $x\in D$, there exists a  holomorphic embedding $f: D\rightarrow B^n$ such that
$f(x)=0$ and $B^n(0, s_D(x))\subset f(D)$.
\end{thm}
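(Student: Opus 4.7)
The plan is to extract a limit of a maximizing sequence of embeddings via Montel's theorem, verify injectivity of this limit using the generalized Hurwitz theorem proved earlier in this section, and finally establish the image inclusion by applying a normal-family argument to the inverse maps.

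By definition of the supremum, pick holomorphic embeddings $f_k : D \to B^n$ with $f_k(x) = 0$ and $s_D(x, f_k) \nearrow s_D(x)$. Since $\|f_k\| < 1$, Montel's theorem produces a subsequence (still denoted $f_k$) converging locally uniformly on $D$ to a holomorphic map $f : D \to \overline{B^n}$ with $f(x) = 0$; the plurisubharmonic maximum principle applied to $|f|^2$, together with $|f(x)|^2 = 0$, then forces $f(D) \subset B^n$ strictly. To apply the generalized Hurwitz theorem I check that $\det df(x) \neq 0$: the inverses $g_k := f_k^{-1}$ are defined on $B^n(0, s_D(x, f_k))$ with values in the bounded domain $D$, so componentwise Cauchy estimates give $\|dg_k(0)\| \leq C/s_D(x, f_k)$, and consequently $|\det df_k(x)| = |\det dg_k(0)|^{-1}$ is bounded below by a positive constant, a bound that passes to the limit. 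The generalized Hurwitz theorem --- a locally uniform limit of injective holomorphic maps from $D$ into $\mathbb{C}^n$ is either injective or has identically degenerate Jacobian --- now forces $f$ to be a holomorphic embedding $D \to B^n$.

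For the containment $B^n(0, s_D(x)) \subset f(D)$, fix $r < s_D(x)$. For $k$ large the inverse $g_k$ is defined on $B^n(0, r)$ and takes values in the bounded set $D$, so Montel yields a subsequential limit $g : B^n(0, r) \to \overline{D}$ with $g(0) = x$. The inverse function theorem applied to $f$ at $x$, combined with uniform convergence of $f_k$ to $f$ on a neighborhood of $x$, shows that $g$ equals the local inverse of $f$ near $0$. I would then argue that the set $E = \{y \in B^n(0, r) : g(y) \in D \text{ and } f(g(y)) = y\}$ is both open and relatively closed in $B^n(0, r)$ and therefore equal to the whole ball, which gives $B^n(0, r) \subset f(D)$; letting $r \nearrow s_D(x)$ finishes the proof.

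The main obstacle is the closedness step in this final argument: ruling out the possibility that $g$ takes a value in $\partial D$ over some $y_0 \in \overline{E} \setminus E$. The generalized Hurwitz theorem is the key tool here, since it ensures the univalence of the $f_k$ is not lost in the limit and prevents the image balls $B^n(0, s_D(x, f_k))$ from degenerating to something strictly smaller inside $f(D)$.
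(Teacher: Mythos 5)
Your overall strategy is the same as the paper's: take a maximizing sequence of embeddings, extract a locally uniform limit via Montel, prove the limit is injective via the generalized Hurwitz theorem, and obtain the ball containment by passing to limits of the inverse maps. The first two thirds of your argument are sound. One remark: since $s_D(x,f_k)$ is increasing, the fixed ball $B^n(0,s_D(x,f_1))$ lies inside every $f_k(D)$, so the hypothesis of the paper's Theorem \ref{thm:generalized Hurwitz} (a fixed neighborhood of $0$ contained in all images) holds directly and injectivity of $f$ follows at once; your Cauchy-estimate lower bound on $|\det df_k(x)|$ is correct but is only needed if one uses the dichotomy form of Hurwitz (``injective or identically degenerate Jacobian''), which is not the statement proved in this section, though it can be derived from the same two lemmas.

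The genuine gap is exactly where you flag it: the closedness of your set $E$, i.e.\ ruling out that $g$ takes a boundary value of $D$. You give no argument there, only the remark that the generalized Hurwitz theorem ``prevents the image balls from degenerating to something strictly smaller inside $f(D)$'' --- but that is precisely the assertion $B^n(0,r)\subset f(D)$ you are trying to prove, and Hurwitz-type injectivity of the limit of the $f_k$ says nothing about whether the values of the limit $g$ of the inverses stay inside $D$ (injectivity of $f$ is perfectly compatible, a priori, with $f(D)$ failing to contain the ball). The paper closes this step with a different tool, applied to $g$ rather than to the $f_k$: each $g_k$ is injective, so $\det J_{g_k}$ is nowhere zero; since $f\circ g=\mathrm{Id}$ near $0$ (inverse function theorem plus local uniform convergence) one has $\det J_g(0)\neq 0$; then Lemma \ref{lem: stability of immersion} (a limit of nowhere-vanishing holomorphic functions is nowhere vanishing unless identically zero), applied to $\det J_{g_k}\to\det J_g$, gives $\det J_g\neq 0$ everywhere. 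Hence $g$ is an open map, which forces $g(B^n(0,r))\subset D$; then $f\circ g$ is defined on all of $B^n(0,r)$ and equals $\mathrm{Id}_{B^n(0,r)}$ as the limit of $f_k\circ g_k=\mathrm{Id}$, yielding $B^n(0,r)\subset f(D)$. Without some such argument about $g$ itself --- nonvanishing of its Jacobian, hence openness --- your open-closed argument does not close, so the proposal as written is incomplete at its crux.
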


By definition we have $s_D(z)\leq 1$ for all $z\in D$.
By Theorem \ref{thm:existence of ext}, we see that $s_D(z)=1$ for some $z\in D$ if and only if $D$ is holomorphically equivalent to $B^n$.

To prove Theorem \ref{thm:existence of ext}, we need to generalize Hurwitz's theorem  in classical complex analysis to several complex variables. Hurwitz's theorem in one complex variable says that the limit of a sequence of univalent functions on a planar domain is univalent  unless it is constant (see e.g. \cite{Remmert}). Of course there is no direct generalization of this result in higher dimensions, however, a modified version described in the following theorem still holds:

\begin{thm}\label{thm:generalized Hurwitz}
Let $D$ be a bounded domain in $\mathbb{C}^n$ and $x\in D$, let $f_i$ be a sequence of injective holomorphic maps from $D$ to  $\mathbb{C}^n$
such that $f_i(x)=0\in \mathbb{C}^n$ for all $i$. Suppose $f_i$ converges  to a map $f:D\rightarrow \mathbb{C}^n$ uniformly on compact subsets of $D$. If there exists a neighborhood $U$ of $0$ in $\mathbb{C}^n$ such that $U\subset f_i(D)$ for all $i$, then $f$ is injective.
\end{thm}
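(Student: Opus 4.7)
My plan is to adapt the classical one-variable Hurwitz argument, replacing Rouch\'e's theorem by the homotopy invariance of the holomorphic mapping degree. The key new ingredient, supplied precisely by the hypothesis $U\subset f_i(D)$, will be the construction of a local holomorphic right inverse of $f$ at the origin; without this $f$ could a priori have identically vanishing Jacobian and the degree argument would collapse.

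First I would produce that right inverse. By the classical theorem that an injective holomorphic map between open sets of $\mathbb{C}^n$ has nowhere-vanishing Jacobian, $g_i:=f_i^{-1}|_U:U\to D$ is a well-defined holomorphic map with $g_i(0)=x$. Since $D$ is bounded, $\{g_i\}$ is a normal family by Montel's theorem; extract a subsequence converging locally uniformly on $U$ to some holomorphic $g:U\to\overline D$. Because $g(0)=x\in D$, on a smaller open neighborhood $U'\subset U$ of $0$ we have $g(U')\subset D$, and passing to the limit in $f_i\circ g_i=\mathrm{id}_U$ on compact subsets of $U'$ (valid because $f_i\to f$ uniformly on each compact set $g(K)\Subset D$ for $K\Subset U'$) gives $f\circ g=\mathrm{id}_{U'}$. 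Differentiating yields $\det Df\neq 0$ on $g(U')$, so $\det Df\not\equiv 0$ on the connected set $D$, and $Z:=\{\det Df=0\}$ is a proper analytic subvariety; in particular, $f$ is a local biholomorphism off $Z$.

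Assume, for contradiction, that $f(p)=f(q)=w_0$ for some $p\ne q$. Choose small disjoint open neighborhoods $V_p\Subset D$ of $p$ and $V_q\Subset D$ of $q$ such that $f^{-1}(w_0)\cap\overline{V_p}=\{p\}$ and $f^{-1}(w_0)\cap\overline{V_q}=\{q\}$; then $w_0\notin f(\partial V_p\cup\partial V_q)$, and the holomorphic mapping degrees $\deg(f,V_p,w_0)$ and $\deg(f,V_q,w_0)$ are well-defined positive integers (non-negativity is automatic for holomorphic maps between equidimensional open sets, and positivity follows because each fibre is non-empty). The uniform convergence $f_i\to f$ on the compact sets $\partial V_p,\partial V_q$ gives $w_0\notin f_i(\partial V_p\cup\partial V_q)$ for all sufficiently large $i$, and the homotopy invariance of the degree yields $\deg(f_i,V_p,w_0)\geq 1$ and $\deg(f_i,V_q,w_0)\geq 1$. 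Hence each such $f_i$ has a preimage of $w_0$ in $V_p$ and another in $V_q$---two distinct preimages, contradicting the injectivity of $f_i$.

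The step I expect to require the most care is arranging $p$ and $q$ to be isolated points of $f^{-1}(w_0)$, since otherwise $w_0\in f(\partial V_p)$ for every small $V_p$. Isolation can fail only if $f^{-1}(w_0)$ has a positive-dimensional component through $p$ or $q$, forcing the offending point into $Z$. I would dispose of this by perturbing $w_0$ to a nearby regular value $w_0'$ of $f$ (which exists by Sard's theorem, since $f(Z)$ has Lebesgue measure zero); at a regular value every preimage is automatically isolated and lies off $Z$. The residual point---finding preimages of $w_0'$ simultaneously close to $p$ and to $q$---reduces to the local openness of $f$ near those points, which I would establish by propagating the local right inverse $g$ along $D\setminus Z$ via analytic continuation, using the identification $f^{-1}(U')=g(U')$ proved by observing that any $y\in f^{-1}(U')$ satisfies $y=g_i(f_i(y))\to g(f(y))$.
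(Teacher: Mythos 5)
Your first two paragraphs are correct and essentially reproduce the paper's own argument: the local right inverse $g=\lim f_i^{-1}|_U$ obtained from Montel's theorem gives $f\circ g=\mathrm{id}_{U'}$ and hence $\det Df\not\equiv 0$, and converting two isolated preimages of $w_0$ into two distinct $f_i$-preimages of a common value is exactly the paper's use of the generalized Rouch\'e theorem (Lemma~\ref{lem:generalized Rouche}, itself a degree-theoretic statement, so your homotopy-invariance formulation is the same argument).

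The gap is in your third paragraph, i.e.\ precisely in the case $p\in Z$ or $q\in Z$ that the paragraph is meant to handle. You reduce it to ``local openness of $f$ near those points'', but that statement is not available at this stage, and it is false for maps satisfying everything you have established so far. Take $f(z_1,z_2)=(z_1,z_1z_2)$ on the bidisc: $\det Df=z_1\not\equiv 0$; $f$ admits a holomorphic right inverse $g(w)=(w_1,w_2/w_1)$ over a small ball $U'$ about $(1/2,0)$ with $f^{-1}(U')=g(U')$; and $f(p)=f(q)=(0,0)$ for $p=(0,0)$, $q=(0,1)$. For this map $f(V_p)\cap f(V_q)=\{(0,0)\}$ for all small neighborhoods $V_p,V_q$ (if $|w_2|<\epsilon|w_1|$ and $|w_2-w_1|<\epsilon|w_1|$ with $\epsilon<1/2$, then $w_1=w_2=0$), so there is no common regular value $w_0'$ to perturb to: Sard's theorem and every degree count are useless here. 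Openness at points of $Z$ can therefore only come from the hypotheses you have not yet used (injectivity of the $f_i$ and $U\subset f_i(D)$), and your proposed mechanism --- analytic continuation of the single germ $g$ --- cannot deliver it: continuation may be obstructed by $f(Z)$, and where it exists it produces one branch of $f^{-1}$, with no control that this branch takes values near $p$ rather than near $q$ (you would need two distinct branches simultaneously). The paper removes the problem at the root with Lemma~\ref{lem: stability of immersion}: since each $f_i$ is injective, $\det J_{f_i}$ is zero-free; these converge locally uniformly to $\det J_f$, and a locally uniform limit of zero-free holomorphic functions is zero-free unless identically zero (proved by slicing with complex lines and the one-variable Hurwitz theorem). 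Combined with your first paragraph this gives $Z=\emptyset$, so every fibre of $f$ is discrete, isolation of $p$ and $q$ is automatic, and your second paragraph already completes the proof --- no Sard, no perturbation, no continuation. That stability lemma is the single missing ingredient.
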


 To prove Theorem \ref{thm:generalized Hurwitz}, we need two lemmas. The first is
\begin{lem}\label{lem: stability of immersion}Let $D$ be a domain in $\mathbb{C}^n$ and $\varphi_i$ a sequence of holomorphic maps from $D$ to $\mathbb{C}^n$ which is convergent to a map $\varphi:D\rightarrow \mathbb{C}^n$ uniformly on compact subsets of $D$. If all $\varphi_i$ have no zero in $D$, then $\varphi$ has no zero in $D$ unless it is identically zero.
\end{lem}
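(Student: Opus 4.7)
The plan is to reduce the claim to the classical one-variable Hurwitz theorem by restricting $\varphi$ and the $\varphi_i$ to a well-chosen complex line through a hypothetical zero of $\varphi$.

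I would argue by contradiction. Suppose $\varphi \not\equiv 0$ on $D$ but $\varphi(z_0) = 0$ for some $z_0 \in D$. By the identity principle, $\varphi$ cannot vanish identically in any neighborhood of $z_0$, so some homogeneous piece in the Taylor expansion of $\varphi$ at $z_0$ is nonzero; for any direction $v \in \mathbb{C}^n$ lying off the zero locus of that homogeneous polynomial---a proper analytic condition on $v$---the map $t \mapsto \varphi(z_0 + tv)$ is holomorphic in $t$ and does not vanish identically near $t = 0$.

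Fixing such a $v$ and choosing $r > 0$ small enough that the segment $\{z_0 + tv : |t| \leq r\}$ is contained in $D$, I would set $g_i(t) := \varphi_i(z_0 + tv)$ and $g(t) := \varphi(z_0 + tv)$ on the open disc $\Delta_r = \{|t| < r\}$. Uniform convergence of $\varphi_i$ to $\varphi$ on the compact segment yields $g_i \to g$ uniformly on compact subsets of $\Delta_r$; each $g_i$ is nowhere zero on $\Delta_r$ because $\varphi_i$ is nowhere zero on $D$; and by construction $g \not\equiv 0$ while $g(0) = \varphi(z_0) = 0$. The classical one-variable Hurwitz theorem then rules out this configuration, since a locally uniform limit of nowhere-zero holomorphic functions on a disc must itself be either identically zero or nowhere zero.

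The main obstacle I anticipate is the first step, namely the selection of a line $\ell$ through $z_0$ along which $\varphi|_\ell$ is not identically zero; this is handled by inspecting the lowest-order nonzero homogeneous term in the Taylor expansion of $\varphi$ at $z_0$ and invoking the identity principle, so that the vanishing directions form a proper analytic subset of $\mathbb{C}^n$. Once such a direction has been secured, the remainder is a routine reduction to one complex variable.
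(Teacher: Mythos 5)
Your argument is correct and is essentially the paper's own: both reduce to the classical one-variable Hurwitz theorem by restricting the maps to a complex line through a zero of $\varphi$. The only tactical difference is that the paper never needs to select a good direction (the step you flag as the main obstacle): after assuming $D$ is a ball, it applies the Hurwitz dichotomy to \emph{every} line through the zero point $x$ --- each restriction of $\varphi$ is a locally uniform limit of nowhere-vanishing functions and vanishes at $x$, hence vanishes identically on that line --- so $\varphi \equiv 0$ on the ball, and the identity theorem finishes the general case.
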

\begin{proof}
By the identity theorem of holomorphic functions, we may assume $D$ is a ball. Assume there exists $x\in D$ such that $\varphi(x)=0$. For any point $z\in D$, consider the intersection of $D$ and the complex line containing $x$ and $z$, then a version of the classical Hurwitz theorem (see Corollary in P. 162 in \cite{Remmert} ) implies that $\varphi(z)=0$. So $\varphi$ is identically
zero on $D$.
\end{proof}

The second lemma we need in the proof of Theorem \ref{thm:generalized Hurwitz} is the generalized Rouch\'{e}'s theorem in higher dimensions, whose proof  relies on the mapping degree theory in differential topology.
\begin{lem}\label{lem:generalized Rouche}
(Theorem 3 in \cite{Lloyd}) Let $D$ be a bounded domain in $\mathbb{C}^n$, suppose $f$ and $g$ are two holomorphic maps from $D$ to $\mathbb{C}^n$
such that
$$\parallel g(z)\parallel<\parallel f(z)\parallel,\ \ z\in \partial D$$
Then $f$ and $f + g$ have the same number of zeros in D, counting multiplicities, where $\parallel\cdot \parallel$ is the standard norm in $\mathbb{C}^n$.
\end{lem}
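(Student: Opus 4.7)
The plan is to prove this higher-dimensional Rouch\'e theorem via Brouwer (topological) degree theory, mirroring the classical one-variable proof through the argument principle. The central ingredient is the identification, for a holomorphic map $h:\bar D\to\mathbb{C}^n$ with $0\notin h(\partial D)$, of the number of zeros of $h$ in $D$ counted with multiplicity with the topological mapping degree $\deg(h, D, 0)$. This rests on the fact that at an isolated zero $z_0$ of a holomorphic map, the local topological degree equals the intersection multiplicity $\dim_{\mathbb{C}}\mathcal{O}_{z_0}/(h_1,\dots,h_n)$; this is the higher-dimensional analog of the elementary observation that $z\mapsto z^k$ has winding number $k$, and follows from Weierstrass preparation together with the fact that holomorphic maps are orientation-preserving, so every sheet contributes $+1$ to the local degree.

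With this in hand, I would consider the straight-line homotopy $h_t(z) := f(z) + tg(z)$ for $t\in[0,1]$. On $\partial D$, the triangle inequality and the hypothesis give
$$\|h_t(z)\| \;\ge\; \|f(z)\| - t\|g(z)\| \;\ge\; \|f(z)\| - \|g(z)\| \;>\; 0,$$
so $h_t$ has no zero on $\partial D$ uniformly in $t$. Homotopy invariance of the Brouwer degree then yields $\deg(f,D,0)=\deg(f+g,D,0)$, and the previous identification translates this equality into the claimed equality of zero counts with multiplicity.

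The main obstacle I anticipate is a boundary-regularity issue: the inequality $\|g\|<\|f\|$ is stated on $\partial D$, yet $f$ and $g$ are only assumed holomorphic on $D$, so in order to legitimately invoke mapping-degree theory on $\bar D$ some extension or compactification is needed. I would address this by exhausting $D$ with slightly shrunken subdomains $D_\epsilon \Subset D$ having smooth boundary and applying the argument on each $D_\epsilon$, provided the strict inequality persists there (which is implicit in Lloyd's formulation and requires some form of continuity up to the boundary). A secondary subtlety, easier but worth verifying, is the identification of local topological degree with intersection multiplicity at zeros where the Jacobian degenerates; this is standard but needs either a generic perturbation argument or a direct algebraic computation to pin down carefully.
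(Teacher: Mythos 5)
Your proposal is correct and follows exactly the route the paper indicates: the paper does not prove this lemma itself but cites it as Theorem 3 of \cite{Lloyd}, remarking only that the proof ``relies on the mapping degree theory in differential topology,'' which is precisely your argument (identification of the zero count with multiplicity with the Brouwer degree via positivity of the local degree of holomorphic maps, then homotopy invariance applied to $h_t = f + tg$, whose nonvanishing on $\partial D$ follows from the triangle inequality). Your handling of the boundary-regularity issue by exhausting $D$ with smooth subdomains $D_\epsilon \subset\subset D$ is also the standard way to reconcile the statement with maps assumed holomorphic only on $D$, so the sketch is sound as written.
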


 With the above two lemmas, we can give the proof of Theorem \ref{thm:generalized Hurwitz} as follows:
 \begin{proof}(Proof of Theorem \ref{thm:generalized Hurwitz})Let $g_i = f_i^{-1}|_U$. By Montel theorem, the sequence $\{g_i\}$ is  convergent to a holomorphic map $g:U\rightarrow \mathbb{C}^n$ uniformly on compact subsets of $U$. Note that $g(0) = x$ is a interior point of $D$,  we can assume $g(U)\subset D$ by taking $U$ smaller enough. It is clear that $f_ig_i=Id_U$ for all $i$. Let $i$ tends to $\infty$, we get $fg = Id_U$. This implies that the Jacobian determinant $detJ_f(x)$ of $f$ at $x$ is not zero. Since $f_i$ are injective, $detJ_{f_i}(z)\neq 0$ for all $i$ and all $z\in D$ (see e.g. Theorem 8.5 in \cite{Fritzsche}). Note that $detJ_{f_i}$ converges to $detJ_f$ uniformly on compact subsets of $D$, by Lemma \ref{lem: stability of immersion}, we see that $detJ_f(z)\neq 0$ for all $z\in D$.
 We  prove $f$ is injective. If it is not the case, there exist $z_1 , z_2\in D$, $z_1\neq z_2$, such that $f(z_1)=f(z_2)$. Since $detJ_f\neq 0$, we can choose a neighborhood $\Omega\subset\subset D$ of $z_1$  such that $f|_{\overline{\Omega}}$ is injective and $z_2\not\in\overline{\Omega}$.
 Set $\tilde{f_i}=f_i-f_i(z_1)$ and  $\tilde{f}=f-f(z_1)$, then $\tilde{f_i}$ converges to $\tilde{f}$ uniformly on $\overline{\Omega}$. Note that $\tilde{f}$ has a zero in $\Omega$ and $\tilde{f}^{-1}(0)\cap \partial\Omega = \emptyset$, by Lemma \ref{lem:generalized Rouche}, $\tilde{f_i}$ has a zero in $\Omega$ for $i$ large enough, which  contradicts to the fact that $f_i$ are all injective on $D$.
 \end{proof}

\begin{rem}
 The assumptions in Theorem \ref{thm:generalized Hurwitz} that $D$ is bounded and all $f_i(D)$ contain a fixed neighborhood of $0\in \mathbb{C}^n$ are necessary. In fact, the result in Theorem \ref{thm:generalized Hurwitz} does not hold without any one of the assumptions. For example, taking
$f_i(z_1,z_2)= (z_1, z_2/i)$
as a sequence  holomorphic maps from $D=\mathbb{C}^2$ to itself, the limit map $f:D\rightarrow\mathbb{C}^2$ is given by $f(z_1, z_2)=(z_1 , 0)$, it is not injective even  $f_i$ are injective and $\mathbb{C}^2\subset f_i(D)$ for all $i$; the restrictions $f_i|_{B^2}$ give a sequence of injective holomorphic maps from $B^2$ to $\mathbb{C}^2$ with limit map  $f|_{B^2}$, which is not injective since not all $f_i(B^2)$ contain a fixed neighborhood of $0\in \mathbb{C}^2$. On the other hand, by a result  in \cite{Hahn}, the two assumptions can be replaced by assuming  that $|detJ_{f_i}(x)|$ have a positive lower bound.
\end{rem}
Using the results in Theorem \ref{thm:generalized Hurwitz} and Lemma \ref{lem: stability of immersion}, we give the proof  of Theorem \ref{thm:existence of ext}:
\begin{proof}(Proof of Theorem \ref{thm:existence of ext})
By definition of squeezing functions, there exist a sequence of holomorphic embeddings $f_i:D\rightarrow B^n$ with $f_i(x)=0$, and a sequence of increasing positive numbers $r_i$ convergent to $s_D(x)$ such that $B^n(0 , r_i)\subset f_i(D)$. By Montel theorem, there exists a subsequence $\{f_{i_k}\}$ of $f_i$ which converges to a homomorphic map $f:D\rightarrow \mathbb{C}^n$ uniformly on compact subsets of $D$. Since $B^n(0 , r_1)\subset f_i(D)$ for all $i$, by Theorem \ref{thm:generalized Hurwitz}, $f$ is injective. In particular it is an open map and hence $f(D)\subset B^n$. Then we get a holomorphic embedding $f:D\rightarrow B^n$ with $f(0)=0$.\\
To prove $B^n(0 , s_D(x))\subset f(D)$, it suffices to prove $B^n(0 , r_{j})\subset f(D)$ for each fixed integer $j$. By assumption, $B^n(0 , r_{j})\subset f_i(D)$ for all $i>j$. Let $g_i = f^{-1}_i|_{B^n(0 , r_{j})}$, then we have $f_{i_k}g_{i_k} = Id_{B^n(0 , r_{j})}$ for $i_k>j$. By Montel theorem,  we may assume that the sequence $\{g_{i_k}\}$ converges to a holomorphic map $g:B^n(0 , r_{j})\rightarrow \mathbb{C}^n$ uniformly on compact subsets of $B^n(0 , r_{j})$. We want to prove that $g(B^n(0 , r_{j}))\subset D$. Note that $g(0)=x$, hence there exists a neighborhood $U$ of $0$ in $B^n(0 , r_{j})$ such that $g(U)\subset D$. This implies $f\cdot g|_U$ is defined and it is clearly equal to the identity map $Id_U$. So $detJ_g(0)\neq 0$. Since $detJ_{g_i}\neq 0$ for all $i>i_0$, by Lemma \ref{lem: stability of immersion}, we have $det J_f\neq 0$ and hence $g$ is an open map, which implies that $g(B^n(0 , r_{j}))\subset D$. Therefore  $fg:B^n(0 , r_{j})\rightarrow B^n(0 , r_{j})$ is a well defined map. It is clear that $fg=Id_{B^n(0 , r_{j})},$  so we have $B^n(0 , r_{j})\subset f(D)$.\\

\end{proof}

\section{Continuity of squeezing functions}\label{sec:continuity}
In this section, we will prove that the squeezing function on any bounded domain is  continuous. As a consequence, a bounded domain is a holomorphic homogeneous regular domain if it  covers a compact complex manifold.

\begin{thm}\label{thm:continuity of squeezing functions}
The squeezing function $s_D$ of any bounded domain $D$ in $\mathbb{C}^n$ is continuous.
\begin{proof}
Since $D$ is a bounded domain,  the Kobayashi metric on $D$ is nondegenerate.

Let $a$ be an arbitrary point in $D$ and $\{z_k\}$ be a sequence in $D$ convergent to $a$, and let $\epsilon$ be an arbitrary positive number. By Theorem \ref{thm:existence of ext},
there exists a holomorphic embedding $f: D\rightarrow B^n$ such that $f(a)=0$ and $B^n(0,s_D(a))\subset f(D)$. Since $f$ is continuous, there exists an integer  $N$ such that $||f(z_k)||<\epsilon$ for $k>N$. Define $f_k:D\rightarrow \mathbb{C}^n$ as
$$f_k(z) =  \frac{f(z)-f(z_k)}{1+\epsilon}$$
for $k>N$, then $f_k(D)\subset B^n$, $f_k(z_k)=0$ and $$B^n(0, \frac{s_D(a)-\epsilon}{1+\epsilon})\subset f_k(D).$$ This implies that $s_D(z_k)\geq (s_D(a)-\epsilon)/(1+\epsilon)$. Let $\epsilon$ tends to 0, we get
$$\liminf_{k\rightarrow \infty} s_D(z_k)\geq s_D(a).$$
Let $K_D(\cdot , \cdot)$ be the Kobayashi distance on $D$. It is known that  $K_D$ is continuous on $D\times D$ (see e.g. \cite{Kobayashi}). So we have $K_D(z_k,a)\rightarrow 0$ as $k\rightarrow \infty$. By Theorem \ref{thm:existence of ext}, for each $k$, there exists a holomorphic embedding $f_k: D\rightarrow B^n$ such that $f_k(z_k)=0$ and $B^n(0, s_D(z_k))\subset f_k(D)$. By the decreasing property of  Kobayashi distances (see e.g. \cite{Kobayashi}), we have $$K_{B^n}(f_k(z_k), f_k(a)) = K_{B^n}(0, f_k(a))\leq K_D(z_k,a)$$
for all $k$. So $K_{B^n}(0, f_k(a))\rightarrow 0$, which implies that $f_k(a)$ tends to 0 in the ordinary topology (see \cite{Barth}). So, for any positive number $\epsilon$, there exists an integer $M$ such that $||f_k(a)||< \epsilon$ for $k>M$. This implies
$$s_D(a)\geq \frac{s_D(z_k)-\epsilon}{1+\epsilon},$$
so we have
$$s_D(a)\geq\limsup_{k\rightarrow \infty}\frac{s_D(z_k)-\epsilon}{1+\epsilon}.$$
Letting $\epsilon$ tends to 0, we get
$$s_D(a)\geq\limsup_{k\rightarrow \infty}s_D(z_k).$$
So $\lim_{k\rightarrow \infty}s_D(z_k)= s_D(a)$, namely $s_D$ is continuous at $a$. Note that $a$ is arbitrary, so $s_D$ is continuous on $D$.
\end{proof}
\end{thm}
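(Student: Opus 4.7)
The plan is to fix an arbitrary point $a\in D$ and an arbitrary sequence $\{z_k\}\subset D$ with $z_k\to a$, and then prove the two one-sided bounds $\liminf_{k\to\infty} s_D(z_k)\geq s_D(a)$ and $\limsup_{k\to\infty} s_D(z_k)\leq s_D(a)$ separately. Combined, these give $\lim_{k\to\infty}s_D(z_k)=s_D(a)$ and hence continuity at $a$. The crucial input is Theorem \ref{thm:existence of ext}, which guarantees that the supremum in the definition of $s_D$ is actually attained, so extremal embeddings are available at every point.

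For the lower bound, I would take an extremal embedding $f:D\to B^n$ with $f(a)=0$ and $B^n(0,s_D(a))\subset f(D)$. Since $f$ is continuous at $a$, we have $f(z_k)\to 0$, so for any $\varepsilon>0$ and all sufficiently large $k$ the point $f(z_k)$ lies within $\varepsilon$ of the origin. A short computation shows that the renormalized map
\[
f_k(z)=\frac{f(z)-f(z_k)}{1+\varepsilon}
\]
takes $D$ into $B^n$, sends $z_k$ to $0$, and contains $B^n\!\left(0,\tfrac{s_D(a)-\varepsilon}{1+\varepsilon}\right)$ in its image. This yields $s_D(z_k)\geq (s_D(a)-\varepsilon)/(1+\varepsilon)$; letting $k\to\infty$ and then $\varepsilon\to 0$ gives the desired lower semicontinuity.

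For the upper bound, for each $k$ choose by Theorem \ref{thm:existence of ext} an extremal embedding $f_k:D\to B^n$ with $f_k(z_k)=0$ and $B^n(0,s_D(z_k))\subset f_k(D)$. The idea is to reverse the roles: apply the construction above with $a$ and $z_k$ swapped, provided one knows $f_k(a)\to 0$. The key point, and the main obstacle, is showing this convergence even though the embeddings $f_k$ vary with $k$. This is where the Kobayashi (pseudo)distance $K_D$ enters: since $D$ is bounded, $K_D$ is a genuine continuous distance, so $K_D(z_k,a)\to 0$. By the distance-decreasing property under the holomorphic map $f_k$,
\[
K_{B^n}(0,f_k(a))=K_{B^n}(f_k(z_k),f_k(a))\leq K_D(z_k,a)\longrightarrow 0,
\]
and convergence in the Kobayashi distance of $B^n$ to an interior point coincides with ordinary Euclidean convergence. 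Thus $f_k(a)\to 0$, and the same renormalization trick applied to $f_k$ yields $s_D(a)\geq(s_D(z_k)-\varepsilon)/(1+\varepsilon)$ for large $k$, giving upper semicontinuity after letting $\varepsilon\to 0$.

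The main subtlety is therefore not the algebraic renormalization but the use of Kobayashi geometry to force $f_k(a)\to 0$ despite the lack of uniform control on the sequence $\{f_k\}$. Everything else reduces to the openness of $B^n$ under the affine shifts $w\mapsto (w-f_k(a))/(1+\varepsilon)$ and the trivial inclusion of smaller balls under such shifts.
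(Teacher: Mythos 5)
Your proposal is correct and follows essentially the same route as the paper's own proof: both halves use the extremal embeddings from Theorem \ref{thm:existence of ext}, the same renormalization $z\mapsto (f(z)-f(z_k))/(1+\varepsilon)$ for lower semicontinuity, and the same Kobayashi distance-decreasing argument (with the fact that the Kobayashi distance on $B^n$ induces the Euclidean topology) to force $f_k(a)\to 0$ for upper semicontinuity. No gaps; this matches the paper's argument step for step.
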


For  $r\in [0,1)$, we define
$$\sigma(r)= log\frac{1+r}{1-r}.$$
It is clear that $\sigma(c)$ is strictly increasing for $0\leq  c<1$, its inverse is given by $\sigma^{-1}(w)= \tanh(w/2)$.
For a point $z\in B^n$,  the Kobayashi distance form $0$ to $z$ is $\sigma(|z|)$. For two nonnegative numbers $u$ and $v$, it is not difficult to prove that
$$\sigma^{-1}(u+v)\leq \sigma^{-1}(u)+\sigma^{-1}(v).$$
Let $D$ be a bounded domain in $\mathbb{C}^n$, we define a function $T(\cdot , \cdot)$ on $D\times D$ as
$$T(x,y)= \sigma^{-1}(K_D(x,y))$$
then the above properties of $\sigma$ implies that $T(\cdot , \cdot)$ is a metric on $D$. Since $K_D$ induces the ordinary topology of $D$, so does $T$. From the proof of Theorem \ref{thm:continuity of squeezing functions}, one can directly get the following
\begin{thm}
The squeezing function $s_D$ of $D$ is Lipschitz continuous with respect to the metric $T$. In fact, we have
$$|s_D(x)- s_D(y)|\leq 2T(x , y), \ x, y \in D.$$
\end{thm}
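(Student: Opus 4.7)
The plan is to extract the quantitative content of the continuity proof (Theorem \ref{thm:continuity of squeezing functions}); that argument already contains a Lipschitz estimate once one replaces the arbitrary $\epsilon$ by the sharp bound coming directly from the Kobayashi distance.

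Fix $x,y\in D$. The first step is to invoke Theorem \ref{thm:existence of ext} to choose a holomorphic embedding $f\colon D\to B^n$ with $f(x)=0$ and $B^n(0,s_D(x))\subset f(D)$. The key quantitative input is the decreasing property of the Kobayashi distance applied to $f$: it gives $K_{B^n}(0,f(y))\le K_D(x,y)$, and since the Kobayashi distance on $B^n$ from the origin equals $\sigma(|\cdot|)$, this yields
$$|f(y)|=\sigma^{-1}(K_{B^n}(0,f(y)))\le \sigma^{-1}(K_D(x,y))=T(x,y).$$
Writing $t:=|f(y)|$, I would then form the translated-and-rescaled map
$$g(z):=\frac{f(z)-f(y)}{1+t},$$
which by the triangle inequality sends $D$ into $B^n$ and satisfies $g(y)=0$.

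The second step is to show $B^n(0,(s_D(x)-t)/(1+t))\subset g(D)$: for any $u$ with $|u|<(s_D(x)-t)/(1+t)$, the point $w:=(1+t)u+f(y)$ satisfies $|w|\le (1+t)|u|+t<s_D(x)$, hence $w\in f(D)$, and therefore $u=g(f^{-1}(w))\in g(D)$. By the definition of $s_D(y)$ this forces
$$s_D(y)\ge \frac{s_D(x)-t}{1+t}.$$
Rearranging and using $s_D(y)\le 1$ gives $s_D(x)-s_D(y)\le t(1+s_D(y))\le 2t\le 2\,T(x,y)$. Interchanging the roles of $x$ and $y$ yields the symmetric bound, and hence the claimed Lipschitz estimate. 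No real obstacle arises beyond bookkeeping; the only point to watch is where the constant $2$ enters, namely through $1+s_D(y)\le 2$, which is immediate from the definition of $s_D$.
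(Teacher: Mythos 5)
Your proof is correct and is essentially the paper's intended argument: the paper gives no separate proof, stating only that the estimate follows directly from the proof of Theorem \ref{thm:continuity of squeezing functions}, and your write-up is precisely that extraction --- the extremal embedding from Theorem \ref{thm:existence of ext}, the translated-and-rescaled map $(f(z)-f(y))/(1+t)$, and the decreasing property of the Kobayashi distance combined with $K_{B^n}(0,z)=\sigma(\|z\|)$ to bound $t=\|f(y)\|$ by $T(x,y)$. The only (harmless) streamlining is that you obtain both directions of the inequality from the same Kobayashi-distance bound by symmetry, whereas the paper's continuity proof handled the two directions by slightly different means.
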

\begin{rem}
The same result as in the above theorem still holds if we replace Kobayashi distance in the definition of $T(\cdot , \cdot)$ by Carath\'eodory distance.
\end{rem}

By Theorem \ref{thm:continuity of squeezing functions}, we directly get the following result proved in \cite{Yeung}:
\begin{cor}
Let $D$ be a bounded domain that  covers a compact complex manifold, then $D$ is a holomorphic homogeneous regular domain.
\begin{proof}
Let $X$ be a compact complex manifold that is covered by $D$. By the holomorphic invariance of squeezing functions, $s_D(z)$ can be pushed down to a function on $X$. By Theorem \ref{thm:continuity of squeezing functions}, $s_D(z)$ is continuous. Note that $s_D(z)$ is also positive, it must attain a positive lower bound on $X$, and hence on $D$.
\end{proof}
\end{cor}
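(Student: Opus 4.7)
The plan is to push the squeezing function down to the compact quotient and use compactness to extract a positive lower bound. Let $X$ be the compact complex manifold that is covered by $D$, let $\pi\colon D\rightarrow X$ be the holomorphic covering map, and let $\Gamma$ be its deck transformation group, which acts on $D$ by biholomorphic automorphisms with $X = D/\Gamma$ as complex manifolds.

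The next step is to descend $s_D$ along $\pi$. Since each $\gamma \in \Gamma$ is a biholomorphism of $D$, the holomorphic invariance of squeezing functions noted in the introduction gives $s_D(\gamma z) = s_D(z)$ for every $z \in D$ and $\gamma \in \Gamma$. Therefore $s_D$ is constant on the fibers of $\pi$ and descends to a well-defined function $\tilde{s}\colon X\rightarrow (0,1]$ characterized by $\tilde{s}\circ\pi = s_D$. Because $\pi$ is a local biholomorphism and $s_D$ is continuous by Theorem \ref{thm:continuity of squeezing functions}, the function $\tilde{s}$ is continuous on $X$.

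Finally I would invoke compactness. Since $X$ is compact and $\tilde{s}$ is continuous, $\tilde{s}$ attains its infimum at some $x_0 \in X$. Because $s_D(z)>0$ at every point of $D$ (any small scaling of a bounded embedding into $B^n$ after translating $z$ to the origin provides a positive squeezing value), we have $c := \tilde{s}(x_0) > 0$. Pulling back via $\pi$ yields $s_D(z) \geq c$ for every $z \in D$, so $D$ has a positive lower bound on its squeezing function and is a holomorphic homogeneous regular domain.

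There is no genuine obstacle here beyond bookkeeping: the substantive input is the continuity of $s_D$ already established, while the covering structure plus holomorphic invariance do the rest of the work. The only point requiring a moment of care is to verify that $s_D$ really is strictly positive pointwise, but this is immediate from the boundedness of $D$ since any linear map sending $z$ to the origin and rescaling $D$ into $B^n$ is an embedding with image containing a nonempty ball about the origin.
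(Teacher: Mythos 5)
Your proposal is correct and follows exactly the same route as the paper's proof: descend $s_D$ to the compact quotient via the holomorphic invariance under deck transformations, use the continuity theorem, and extract a positive minimum by compactness. The extra details you supply (local biholomorphism of the covering map, pointwise positivity via a rescaled linear embedding) are just careful bookkeeping of steps the paper leaves implicit.
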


\section{Relations between intrinsic metrics and squeezing functions}\label{sec:relations with metrics}

The main purpose of this section is to investigate relations between squeezing functions and some intrinsic metrics on bounded domains.
We give a boundary estimate of the Carath\'eodory metric of a bounded planar domain in term of boundary behavior of its squeezing function.
In fact,  similar but weaker form of this result still holds in higher dimensional case.  We then focus on bounded holomorphic homogeneous regular domains, and prove that the Carath\'{e}odory metric, the Kobayashi metric and the Bergman metric  on these domains are complete. As a result, a holomorphic homogeneous regular domain must be pseudoconvex.

We  first need the following lemma, which is known as Koebe's one-quarter theorem in classical complex analysis.
\begin{lem}\label{lem:Koebe 1/4}(see e.g. \cite{Ahlfors})
Let $\Delta\subset \mathbb{C}$ be the unit disc. Let $g$ be a univalent holomorphic function on $\Delta$ such that $g(0)=0$ and $g'(0)=1$. Then
$\Delta_{1/4}:=\{z\in\mathbb{C}; |z|<1/4\}\subset g(\Delta)$.
\end{lem}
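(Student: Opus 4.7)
The plan is to follow the classical proof of Koebe's one-quarter theorem, whose core is Bieberbach's coefficient estimate. I would begin by writing the univalent map in Taylor form $g(z) = z + a_2 z^2 + a_3 z^3 + \cdots$ on $\Delta$. The first step is to invoke (or briefly derive) Bieberbach's bound $|a_2| \leq 2$, valid for every univalent holomorphic function on $\Delta$ normalized by $g(0)=0$ and $g'(0)=1$. The standard derivation passes through the odd square-root branch $h(z) = z\,\sqrt{g(z^2)/z^2}$, which one checks is univalent on $\Delta$ with expansion $h(z) = z + \tfrac{a_2}{2}\, z^3 + \cdots$; applying the area theorem to the inversion $1/h(1/\zeta)$ at infinity then yields $|a_2/2|^2 \leq 1$, i.e.\ $|a_2| \leq 2$.

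Second, I would use the classical ``missing-value'' trick. Suppose for contradiction that some $w_0 \in \Delta_{1/4} \setminus \{0\}$ fails to lie in $g(\Delta)$. Form the auxiliary map
$$f(z) = \frac{w_0\, g(z)}{w_0 - g(z)}.$$
Since $g(z) \neq w_0$ on $\Delta$, $f$ is holomorphic on $\Delta$, and a direct check (composition of $g$ with the M\"obius map $w \mapsto w_0 w/(w_0-w)$) shows that $f$ is again univalent with $f(0)=0$ and $f'(0)=1$; expanding in Taylor series one reads off
$$f(z) = z + \left(a_2 + \frac{1}{w_0}\right) z^2 + \cdots.$$

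Third, applying the Bieberbach bound to both $g$ and $f$ gives $|a_2| \leq 2$ and $\bigl|a_2 + \tfrac{1}{w_0}\bigr| \leq 2$, so by the triangle inequality
$$\left|\frac{1}{w_0}\right| \leq \left|a_2 + \frac{1}{w_0}\right| + |a_2| \leq 4,$$
whence $|w_0| \geq 1/4$, contradicting $w_0 \in \Delta_{1/4}$. The only genuine obstacle in this plan is the Bieberbach estimate itself: the algebraic part of the argument collapses immediately once $|a_2|\leq 2$ is available, but that bound ultimately rests on the area theorem, which is the deeper classical input. Since the statement is already quoted from Ahlfors in the paper, in practice I would likewise cite the area theorem / Bieberbach bound rather than reprove them from scratch.
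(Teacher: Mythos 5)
Your proof is correct and is precisely the classical area-theorem/Bieberbach argument that the paper's citation to Ahlfors points to: the paper itself offers no proof of this lemma, quoting it as a known result. All steps check out --- the square-root transform and area theorem give $|a_2|\leq 2$, the M\"obius composition $f(z)=w_0 g(z)/(w_0-g(z))$ for an omitted value $w_0$ is univalent with second coefficient $a_2+1/w_0$, and the triangle inequality forces $|w_0|\geq 1/4$ --- so nothing further is needed.
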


With this lemma, we now prove the following
\begin{thm}\label{thm:squeezing functions vs Caratheodory}
Let $D$ be a bounded domain in $\mathbb{C}$, $x\in D$. Then the Carath\'{e}odory pseudo-norm of $\frac{\partial}{\partial z}$ at $x$ is not less than $s_D(x)/4\delta(x)$, where $z$ is the standard coordinate on $\mathbb{C}$ and $\frac{\partial}{\partial z}$ is viewed as a vector in the tangent space $T_xD$ of $D$ at $x$.
\end{thm}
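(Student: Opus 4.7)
The plan is to combine Theorem \ref{thm:existence of ext} (existence of an extremal map) with the Koebe one-quarter theorem (Lemma \ref{lem:Koebe 1/4}) in order to produce a single holomorphic map into the disc whose derivative at $x$ realizes the desired lower bound.

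First, I would apply Theorem \ref{thm:existence of ext} in the one-dimensional setting, where $B^n = \Delta$, to obtain a univalent holomorphic map $f : D \to \Delta$ with $f(x) = 0$ and $\Delta_{s_D(x)} := \{w : |w| < s_D(x)\} \subset f(D)$. Set $g := f^{-1} : \Delta_{s_D(x)} \to D$; then $g$ is univalent, $g(0) = x$, and $g(\Delta_{s_D(x)}) \subset D$. The key geometric estimate I want to extract from $g$ is a bound on $|g'(0)|$ in terms of $\delta(x)$, the Euclidean distance from $x$ to $\partial D$.

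Next, I would normalize $g$ so that Koebe's one-quarter theorem applies directly: define
$$h(\zeta) := \frac{g(s_D(x)\,\zeta) - x}{s_D(x)\, g'(0)}, \qquad \zeta \in \Delta.$$
Then $h$ is univalent on $\Delta$ with $h(0) = 0$ and $h'(0) = 1$, so by Lemma \ref{lem:Koebe 1/4},
$\Delta_{1/4} \subset h(\Delta)$. Unwinding the normalization, this gives
$$\{z \in \mathbb{C} : |z - x| < s_D(x)|g'(0)|/4\} \;\subset\; g(\Delta_{s_D(x)}) \;\subset\; D,$$
and consequently $\delta(x) \geq s_D(x)|g'(0)|/4$, i.e.,
$|g'(0)| \leq 4\delta(x)/s_D(x)$.

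Finally, I would translate this into the promised Carath\'{e}odory estimate. Since $f \circ g = \mathrm{id}$ on $\Delta_{s_D(x)}$, differentiation at $0$ yields $f'(x)\, g'(0) = 1$, hence
$|f'(x)| = 1/|g'(0)| \geq s_D(x)/(4\delta(x))$. Because $f : D \to \Delta$ is holomorphic with $f(x) = 0$, the very definition of the Carath\'{e}odory pseudo-norm gives
$c_D\!\left(x;\tfrac{\partial}{\partial z}\right) \geq |f'(x)| \geq s_D(x)/(4\delta(x))$,
which is the desired inequality. The argument is essentially a two-line application once the extremal map is in hand; the only technical point to watch is the normalization that puts $g$ into the form required by Koebe's theorem, and the main conceptual input (beyond Koebe) is really Theorem \ref{thm:existence of ext}, which promises the extremal embedding exists rather than merely being approached by a sequence.
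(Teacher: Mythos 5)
Your proposal is correct and follows essentially the same route as the paper's own proof: apply Theorem \ref{thm:existence of ext} to get the extremal univalent map $f$, invert it on $\Delta_{s_D(x)}$, rescale to apply Koebe's one-quarter theorem (Lemma \ref{lem:Koebe 1/4}), deduce $\delta(x)\geq s_D(x)|g'(0)|/4$, and conclude via $|f'(x)|=1/|g'(0)|$ and the definition of the Carath\'{e}odory pseudo-norm. There is nothing to add or correct.
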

\begin{proof}
 By Theorem \ref{thm:existence of ext}, there exists a univalent map $f:D\rightarrow \Delta$ such that $f(x)=0$ and $\Delta_{s_D(x)}\subset f(D)$, where $\Delta_{s_D(x)}$ is the disc in $\mathbb{C}$ with center $0$ and radius $s_D(x)$. We want to estimate the module  $|f'(x)|$ of the derivative of $f$ at $x$. Let $$g=f^{-1}|_{\Delta_{s_D(x)}},$$
it is a univalent map form $\Delta_{s_D(x)}$ to $D$ such that $g(0)=x$.

Now we define a univalent map $\varphi:\Delta\rightarrow \mathbb{C}$ by setting
$$\varphi(z)= \frac{g(s_D(x)\cdot z)-x}{s_D(x)\cdot g'(0)},$$
then it is clear that $\varphi(0)=0$ and $\varphi'(0)=1$. By Lemma \ref{lem:Koebe 1/4}, we have $\Delta_{1/4}\subset \varphi(\Delta)$. This implies that
$$\Delta(x, s_D(x)|g'(0)|/4)\subset g(\Delta_{s_D(x)})\subset D,$$
where, for $a\in \mathbb{C}$ and $r>0$, we set $\Delta(a , r)$  the disc in $\mathbb{C}$ with center $a$ and radius $r$.
In particular, we have
$$\delta(x)\geq s_D(x)|g'(0)|/4.$$
Note that $f'(x)= 1/g'(0)$, we get
$$|f'(x)|\geq \frac{s_D(x)}{4\delta(x)}.$$
This means that the Carath\'{e}odory pseudo-norm of $\frac{\partial}{\partial z}$ at $x$ is not less than $s_D(x)/4\delta(x)$.
\end{proof}

\begin{rem}
Using similar argument as in the proof of Theorem \ref{thm:squeezing functions vs Caratheodory}, one can prove a weaker form of Theorem \ref{thm:squeezing functions vs Caratheodory} in higher dimensional cases. In fact, for a bounded domain $D\subset \mathbb{C}^n$,  one can prove that the Carath\'{e}odory pseudo-norm $\parallel X\parallel_{C_D}$ on $D$ of $X\in T_xD=\mathbb{C}^n$ admits the estimate
$$\parallel X\parallel_{C_D}\geq \frac{s_D(x)\parallel X\parallel}{4\delta(x,X)},$$
where $\delta(x,X)$ is the boundary distance of $x$ with respect to the direction $X$, and $\parallel X\parallel$ is the Euclidean norm of $X$.
\end{rem}

A corollary of theorem\ref{thm:squeezing functions vs Caratheodory} is the following
\begin{thm}\label{thm:boundary estimate}
Let $D$ be a bounded domain in $\mathbb{C}$ satisfying
$$s_D(x) > C/log(1/\delta(x))$$
for some positive constant $C$ and all $x\in D$ with $\delta(x)<1$, then the Carath\'{e}odory metric on $D$ is complete.
\end{thm}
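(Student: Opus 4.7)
The plan is to combine the infinitesimal estimate of Theorem \ref{thm:squeezing functions vs Caratheodory} with the hypothesis on $s_D$ to show that any path in $D$ approaching the boundary has infinite Carath\'eodory length, then to extract completeness by constructing a proper Lipschitz potential. By Theorem \ref{thm:squeezing functions vs Caratheodory}, the Carath\'eodory pseudonorm of $\partial/\partial z$ at $x\in D$ is at least $s_D(x)/(4\delta(x))$, and feeding in the hypothesis $s_D(x)>C/\log(1/\delta(x))$ gives
\[
\Bigl\|\tfrac{\partial}{\partial z}\Bigr\|_{C_D,\,x}\;\geq\;\frac{C}{4\,\delta(x)\,\log(1/\delta(x))}
\]
for every $x\in D$ with $\delta(x)<1$. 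The right hand side is recognizable as $C/4$ times the $\delta$-derivative of $\log\log(1/\delta)$, which motivates the choice of auxiliary function below.

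Next, I would introduce $\Phi(z)=\log\log(1/\delta(z))$ on the boundary strip $\{z\in D:\delta(z)<1/e\}$, extended continuously to all of $D$ by a constant (say by $\max$-ing with some fixed value). Since $\delta$ is $1$-Lipschitz in the Euclidean metric, the chain rule yields $|d\Phi|\leq |dz|/(\delta(z)\log(1/\delta(z)))$ in the strip, and combining this with the infinitesimal lower bound above gives $|d\Phi|\leq (4/C)\,\|dz\|_{C_D}$ pointwise almost everywhere. Integrating along any rectifiable path $\gamma$ from a base point $x_0\in D$ to $z\in D$ and then taking the infimum over such $\gamma$, one concludes that $\Phi$ is Lipschitz with constant $4/C$ with respect to the (integrated) Carath\'eodory distance $c_D$ on $D$.

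Because $\Phi$ is continuous on $D$ and satisfies $\Phi(z)\to+\infty$ as $z\to\partial D$, the estimate $|\Phi(z)-\Phi(x_0)|\leq (4/C)\,c_D(x_0,z)$ forces every Carath\'eodory-bounded subset of $D$ to be relatively compact in $D$: if $c_D(x_0,z_k)\leq M$, then $\Phi(z_k)\leq \Phi(x_0)+4M/C$, so $\delta(z_k)$ is bounded below by a positive constant. Hence a Cauchy sequence in $c_D$ stays in a Euclidean-compact subset of $D$, and, using that on a bounded domain with nondegenerate Carath\'eodory pseudo-distance the Carath\'eodory and Euclidean topologies coincide, any such Cauchy sequence has a limit in $D$. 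This is exactly completeness of the Carath\'eodory metric.

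The main technical point is that $\delta$ is only Lipschitz, so the bound $|d\Phi|\leq(4/C)\|dz\|_{C_D}$ must be interpreted in the almost-everywhere sense furnished by Rademacher's theorem; this is harmless for integration along rectifiable curves. A minor bookkeeping issue is the extension of $\Phi$ past $\delta=1/e$, which does not disturb properness or the Lipschitz bound, since the hypothesis only needs to be used where $\delta(x)<1$. Once these are in place, the completeness statement is immediate from the existence of a proper $c_D$-Lipschitz function on $D$.
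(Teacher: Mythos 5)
Your proof is correct and takes essentially the same route as the paper: both plug the hypothesis into Theorem \ref{thm:squeezing functions vs Caratheodory} to get the infinitesimal lower bound $\bigl\|\tfrac{\partial}{\partial z}\bigr\|_{C_D,\,x}\geq C/\bigl(4\,\delta(x)\log(1/\delta(x))\bigr)$ near $\partial D$, and then conclude completeness from the non-integrability of $1/\bigl(t\log(1/t)\bigr)$ at $t=0$. The paper's proof simply asserts this last implication, whereas your potential-function argument with $\Phi=\log\log(1/\delta)$ supplies the missing details; note only that what this establishes (for you and for the paper alike) is completeness of the inner, i.e.\ integrated, Carath\'eodory distance, which is the natural reading of ``the Carath\'eodory metric is complete.''
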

\begin{proof}
By Theorem \ref{thm:squeezing functions vs Caratheodory}, we see that the Carath\'{e}odory pseudo-norm of $\frac{\partial}{\partial z}$ at $x$ is not less than $\frac{-C}{\delta(x)log\delta(x)}$, which  implies the completeness of the Carath\'{e}dory metric on $D$.
\end{proof}

We focus on holomorphic homogeneous regular domains in the rest of this section. We first prove the completeness of the Carath\'{e}odory metric of a holomorphic homogeneous regular domain.
\begin{thm}\label{thm:Caro. metic on holomorphic homogeneous regular domain complete}
Let $D$ be a holomorphic homogeneous regular domain in $\mathbb{C}^n$ . Then the Carath\'{e}odory metric on $D$ is complete.
\begin{proof}
Since  $D$ is a bounded domain, the Carath\'{e}odory metric on $D$ is nondegenerate. Denote by $C_D(\cdot , \cdot)$ the Carath\'{e}odory distance on $D$.

Let $c>0$ be a positive lower bound of the squeezing function $s_D$ of $D$. We first  prove that, for any $x_0\in D$,  the set
$$A(x_0):= \{x\in D; C_D(x,x_0)< \log(\frac{1+c/2}{1-c/2}) \}$$
is relatively compact in $D$. By theorem\ref{thm:existence of ext}, there exists an open holomorphic embedding $f_{x_0}:D\rightarrow B^n$ such that $f_{x_0}(x_0)=0$ and $B^n(0,c)\subset f_{x_0}(D)$. By the decreasing property of  Carath\'{e}odory metrics, we have
$$f_{x_0}^*C_{B^n}\leq C_D$$
this implies that
$$f_{x_0}(A(x_0))\subset \{z\in B^n; C_{B^n}(z,0)< \log(\frac{1+c/2}{1-c/2})\}.$$
Note that the set
$$\{z\in B^n; C_{B^n}(z,0)< \log(\frac{1+c/2}{1-c/2})\}=\{z\in B^n; ||z||<c/2\}$$
 is relatively compact in $B^n(0 , c)$, so $A(x_0)$ is relatively compact in $D$.

For bounded domains, it is known that the topology induced by the Carath\'{e}odory metric coincides with the ordinary topology(see e.g. \cite{Pflug}). In particular, a compact set of $D$ with the ordinary topology is also compact with respect to the topology induced by the Carath\'{e}odory metric. By the above result, we see that
the Carath\'{e}odory metric on $D$ is complete.
\end{proof}
\end{thm}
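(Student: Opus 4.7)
The plan is to deduce completeness of the Carath\'eodory metric from a uniform relative compactness statement: there exists $r>0$, independent of $x_0\in D$, such that every Carath\'eodory ball $A(x_0,r):=\{x\in D : C_D(x_0,x)<r\}$ is relatively compact in $D$. A standard metric-space argument then yields completeness, because any Cauchy sequence $\{x_k\}$ eventually lies in a single such ball $A(x_N,r)$ and therefore admits a subsequential limit in $D$. The main difficulty is achieving uniformity of the radius $r$ over all base points; this is exactly what the holomorphic homogeneous regularity hypothesis is designed to provide, via a uniform positive lower bound $c$ on $s_D$ and the resulting extremal embeddings of a common ``size.''

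For the key step, fix $c>0$ with $s_D\ge c$ on $D$. For any $x_0\in D$, Theorem~\ref{thm:existence of ext} furnishes a holomorphic embedding $f_{x_0}\colon D\to B^n$ with $f_{x_0}(x_0)=0$ and $B^n(0,c)\subset f_{x_0}(D)$. The decreasing property of Carath\'eodory pseudodistances under holomorphic maps gives
$$C_{B^n}(0, f_{x_0}(x))\le C_D(x_0,x), \quad x\in D.$$
Since $C_{B^n}(0,z)=\sigma(|z|)$, the choice $r:=\sigma(c/2)=\log\tfrac{1+c/2}{1-c/2}$ forces $f_{x_0}(A(x_0,r))\subset B^n(0,c/2)$. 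Now $\overline{B^n(0,c/2)}\subset B^n(0,c)\subset f_{x_0}(D)$, and $f_{x_0}$ is a homeomorphism onto its image, so the preimage $(f_{x_0})^{-1}(\overline{B^n(0,c/2)})$ is a compact subset of $D$ containing $A(x_0,r)$. Hence $A(x_0,r)$ is relatively compact in $D$, with $r$ depending only on $c$.

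To conclude, I would invoke the well-known fact that on a bounded domain the topology induced by $C_D$ coincides with the ordinary topology (see e.g.\ \cite{Pflug}), so relative compactness in either sense is the same. Given a Cauchy sequence $\{x_k\}$ in $(D,C_D)$, choose $N$ so that $C_D(x_k,x_N)<r$ for all $k\ge N$; then $\{x_k\}_{k\ge N}\subset A(x_N,r)$ lies in a compact subset of $D$, admits a convergent subsequence, and the Cauchy condition upgrades this to convergence of the entire sequence in $(D,C_D)$. The only part requiring genuine input from the paper's hypothesis is the uniform construction of $r$; everything else is transport of the corresponding completeness statement for $(B^n,C_{B^n})$ along the family of extremal embeddings supplied by Theorem~\ref{thm:existence of ext}.
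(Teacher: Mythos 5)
Your proposal is correct and takes essentially the same route as the paper's own proof: both use the uniform lower bound $c$ on $s_D$, the extremal embeddings from Theorem~\ref{thm:existence of ext}, and the decreasing property of Carath\'eodory distances to show that every Carath\'eodory ball of the fixed radius $\log\frac{1+c/2}{1-c/2}$ is relatively compact in $D$, and then conclude completeness via the coincidence of the Carath\'eodory and Euclidean topologies. You merely spell out two steps the paper leaves terse, namely the pullback of $\overline{B^n(0,c/2)}$ under the embedding to get compactness and the Cauchy-sequence argument at the end.
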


It is known that a domain whose Carath\'{e}odory metric is complete must be pseudoconvex (see e.g. \cite{Pflug}), as a consequence of Theorem \ref{thm:Caro. metic on holomorphic homogeneous regular domain complete}, we have the following result, which was proved in \cite{Yeung} by different method:

\begin{cor}\label{cor:holomorphic homogeneous regular domain implies stein}
A holomorphic homogeneous regular domain must be pseudoconvex.
\end{cor}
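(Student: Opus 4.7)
The plan is to reduce this corollary directly to Theorem \ref{thm:Caro. metic on holomorphic homogeneous regular domain complete} combined with a standard fact from the theory of invariant metrics, namely that Carath\'eodory completeness implies pseudoconvexity. So the proof is a two-line deduction rather than an independent argument.

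Specifically, let $D \subset \mathbb{C}^n$ be a holomorphic homogeneous regular domain. By definition, $s_D$ admits a positive lower bound, so Theorem \ref{thm:Caro. metic on holomorphic homogeneous regular domain complete} applies and tells us that the Carath\'eodory metric on $D$ is complete. Then I would invoke the classical result (found in e.g. Jarnicki--Pflug, \cite{Pflug}) stating that any bounded domain whose Carath\'eodory distance is Cauchy-complete is a domain of holomorphy, and hence pseudoconvex. Chaining these two implications finishes the proof.

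There is essentially no obstacle here: the hard work has already been absorbed into Theorem \ref{thm:Caro. metic on holomorphic homogeneous regular domain complete}, and the Carath\'eodory-completeness-implies-pseudoconvexity statement is cited as a black box. The only thing worth remarking is that one could alternatively deduce pseudoconvexity from Kobayashi completeness (which also follows since $C_D \le K_D$, so $C_D$-completeness forces $K_D$-completeness), but invoking the Carath\'eodory version gives the shortest route and is already the natural continuation of the previous theorem's conclusion.
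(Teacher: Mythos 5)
Your proposal is correct and coincides with the paper's own argument: the paper likewise deduces the corollary by combining Theorem \ref{thm:Caro. metic on holomorphic homogeneous regular domain complete} with the standard fact, cited from \cite{Pflug}, that completeness of the Carath\'eodory metric implies pseudoconvexity. No gap to report.
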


For any complex manifold, it is well known that its Carath\'{e}odory pseudo-metric is always dominated by its Kobayashi pseudo-metric (see e.g. \cite{Kobayashi}). For bounded domains, a famous result of Lu in \cite{Lu} says that the Carath\'{e}odory metric is always dominated by the Bergman metric. Note also that, for a bounded domain $D$, any one of the three intrinsic metrics- the Carath\'{e}odory metric, the Kobayashi metric, and the Bergman metric- induces  the same topology as the ordinary one.
 Therefore,  as a consequence of Theorem \ref{thm:Caro. metic on holomorphic homogeneous regular domain complete}, we have the following

\begin{thm}\label{thm:intrinsic metric on holomorphic homogeneous regular domain complete}
Let $D$ be a holomorphic homogeneous regular domain. Then the Kobayashi metric and the Bergman metric on it are complete.
\end{thm}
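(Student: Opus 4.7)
The plan is to reduce completeness of the Kobayashi and Bergman metrics to the already-proved completeness of the Carathéodory metric (Theorem \ref{thm:Caro. metic on holomorphic homogeneous regular domain complete}), exploiting exactly the three ingredients collected in the paragraph preceding the statement: the classical inequality $C_D \le K_D$, Lu's inequality $C_D \le B_D$, and the fact that each of the three intrinsic distances induces the usual Euclidean topology on the bounded domain $D$.

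Concretely, I would take a Cauchy sequence $\{z_k\}\subset D$ with respect to the Kobayashi distance $K_D$. Because $C_D(z,w)\le K_D(z,w)$ for all $z,w\in D$, the sequence is automatically Cauchy in the Carathéodory distance $C_D$. By Theorem \ref{thm:Caro. metic on holomorphic homogeneous regular domain complete}, $(D,C_D)$ is complete, so there exists $p\in D$ with $C_D(z_k,p)\to 0$. Since the topology induced by $C_D$ on the bounded domain $D$ coincides with the ordinary Euclidean topology, this gives $z_k\to p$ in the Euclidean sense. Now invoke the continuity of the Kobayashi distance $K_D:D\times D\to \mathbb{R}$ (a standard fact, cited in the proof of Theorem \ref{thm:continuity of squeezing functions}) to conclude $K_D(z_k,p)\to K_D(p,p)=0$, so $\{z_k\}$ converges in $K_D$.

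The argument for the Bergman metric $B_D$ is formally identical, with the single change that the inequality $C_D\le B_D$ is now Lu's theorem \cite{Lu} rather than the elementary Schwarz-Pick estimate. Any $B_D$-Cauchy sequence is $C_D$-Cauchy, hence converges in $C_D$ to some $p\in D$, hence converges to $p$ in the Euclidean topology; since the Bergman distance also induces the Euclidean topology on the bounded domain $D$ and is continuous in both variables, the sequence converges in $B_D$ as well.

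There is no serious obstacle: all three inputs (the two dominations, completeness of $C_D$, and the equivalence of topologies) have already been assembled in the text just above the theorem, so the proof is essentially a two-line synthesis. The only point worth stating carefully is why Euclidean convergence to an interior point $p\in D$ upgrades to convergence in $K_D$ (respectively $B_D$), which is precisely the continuity of these distances on $D\times D$; for the Bergman metric one may alternatively note that $B_D$ is defined from a smooth Kähler potential and so its distance function is continuous in the Euclidean topology.
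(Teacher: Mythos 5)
Your proposal is correct and follows exactly the paper's own route: the paper derives this theorem as an immediate consequence of Theorem \ref{thm:Caro. metic on holomorphic homogeneous regular domain complete}, using the standard domination $C_D\le K_D$, Lu's inequality $C_D\le B_D$, and the fact that all three intrinsic distances induce the ordinary topology on a bounded domain. You have merely written out at the Cauchy-sequence level the synthesis that the paper leaves implicit in the paragraph preceding the statement.
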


\section{Squeezing functions on planar domains }\label{sec:planar domain case}
In this section, we will consider squeezing functions on planar domains. For finitely connected planar domains, we get a neat description of the boundary behavior of their squeezing functions. As a result, we get the necessary and sufficient condition for such a domain to be a holomorphic homogeneous regular domain. If $D$ has smooth boundary, then
$$\lim_{z\rightarrow \partial D} s_D(z) = 1.$$
By continuity of $s_D$, this implies $D$ is a holomorphic homogeneous regular domain. As a consequence, we can recover some important results about planar domains, for example, the three intrinsic metrics-the Carath\'eodory metric, the Kobayashi metric, and the Bergman metric-on a bounded planar domain with smooth boundary are all complete, and they are equivalent;  all smoothly bounded planar domains  are hyperconvex , i.e., they admit bounded exhaustive subharmonic functions.  We also give a class of holomorphic homogeneous regular domains  which are infinitely connected.

It is clear that the squeezing function of the unit disc is the constant function with value 1. By Riemann mapping theorem and  holomorphic invariance of squeezing functions, the squeezing function of any simply connected bounded planar domain is also constant with value 1.

Now we consider squeezing functions on 2-connected planar domains. Define
$$A_r = \{z\in\mathbb{C}; r<|z|<1\}$$
for $0\leq r <1$. When $r>0$, we call $A_r$ an annulus. It is well-known that a 2-connected domain in $\mathbb{C}$ which is not conformal equivalent to $\mathbb{C}^*$  must be holomorphic equivalent to a unique $A_r$ (see e.g. \cite{Ahlfors1}). When $r=0$, $A_0$ is just the punctured disc $\Delta^*$, and we will consider it in the last section.
For $A_r$ with $r>0$, we have the following

\begin{thm}\label{thm:squeezing function of annuli}
For $r>0$, the squeezing function $s_{A_r}(z)$ tends to 1 as $z\rightarrow \partial A_r$. In particular,  $A_r$ is a holomorphic homogeneous regular domain.
\end{thm}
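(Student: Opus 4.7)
The plan is to exploit the symmetries of $A_r$ together with the M\"obius automorphisms of $\Delta$ in order to produce, for each point $z_0$ close to $\partial A_r$, one explicit embedding into $\Delta$ whose squeezed radius at $z_0$ is close to $1$. The rotations $z\mapsto e^{i\theta}z$ are biholomorphisms of $A_r$, so holomorphic invariance of $s_{A_r}$ reduces everything to real $z_0\in(r,1)$; the involution $z\mapsto r/z$ is a biholomorphism of $A_r$ interchanging its two boundary components, so it further suffices to prove
\[
\lim_{z_0\to 1^-}s_{A_r}(z_0)=1
\]
for $z_0\in(r,1)$ approaching the outer boundary along the real axis.

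For such $z_0$, I would test the definition of $s_{A_r}(z_0)$ with the single candidate
\[
\phi(z)=\frac{z-z_0}{1-z_0 z},
\]
the M\"obius automorphism of $\Delta$ sending $z_0$ to $0$. Since $A_r=\Delta\setminus\overline{\Delta_r}$, the image is $\phi(A_r)=\Delta\setminus\phi(\overline{\Delta_r})$, and the largest Euclidean ball centered at $0$ inside this image has radius $\min\bigl(1,\,\rho(z_0)\bigr)$, where $\rho(z_0)$ denotes the Euclidean distance from $0$ to the closed disc $\phi(\overline{\Delta_r})$.

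The main computation is to verify that $\rho(z_0)\to 1$ as $z_0\to 1^-$. Because $\phi$ is M\"obius, $\phi(\overline{\Delta_r})$ is a Euclidean disc; because $z_0$ is real, $\phi$ commutes with complex conjugation, so this disc is symmetric about the real axis and has its diameter $[\phi(-r),\phi(r)]$ lying on the real axis. A short direct calculation gives $\phi(r)=-(z_0-r)/(1-z_0 r)$ and $\phi(-r)=-(z_0+r)/(1+z_0 r)$, both negative, with $|\phi(-r)|>|\phi(r)|$; the center of the disc thus lies on the negative real axis and the nearest point of the disc to $0$ is $\phi(r)$, whence
\[
\rho(z_0)=\frac{z_0-r}{1-z_0 r}\ \longrightarrow\ 1\qquad\text{as }z_0\to 1^-.
\]
Combined with $s_{A_r}(z_0)\ge s_{A_r}(z_0,\phi)$ and the trivial upper bound $s_{A_r}\le 1$, this yields the claimed limit.

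Finally, to conclude that $A_r$ is a holomorphic homogeneous regular domain, I would combine the boundary behavior with the continuity of $s_{A_r}$ established in Theorem~\ref{thm:continuity of squeezing functions}: for any $0<c<1$, the boundary limit gives $s_{A_r}>c$ outside some compact set $K\Subset A_r$, while continuity and positivity of $s_{A_r}$ supply a positive lower bound on $K$. The only real obstacle is the M\"obius geometry computation of $\rho(z_0)$; everything else is either a symmetry of $A_r$ or the already-proved continuity of the squeezing function.
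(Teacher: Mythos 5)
Your proof is correct and is essentially the paper's own argument: the paper likewise composes with the disc automorphism sending the point to $0$ and bounds $s_{A_r}$ below by the distance from the origin to the image of the hole, only it carries out your M\"obius computation intrinsically via the Poincar\'e metric, obtaining $s_{A_r}(z)\geq \sigma^{-1}(\sigma(|z|)-\sigma(r))$, which is exactly your quantity $\rho(z_0)=\frac{z_0-r}{1-z_0 r}$. The use of the inversion $z\mapsto r/z$ for the inner boundary and of continuity (Theorem \ref{thm:continuity of squeezing functions}) together with positivity to extract a uniform positive lower bound also match the paper's proof.
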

\begin{proof}
For  $c\in [0,1)$, we define
$$\sigma(c)= log\frac{1+c}{1-c}.$$
It is clear that $\sigma(c)$ is strictly increasing for $0\leq  c<1$, its inverse is given by $\sigma^{-1}(w)= \tanh(w/2)$.
For a point $z\in \Delta$,  the Poincar\'e distance form $0$ to $z$ is $\sigma(|z|)$.

Now let $z\in A_r$, with respect to the Poincar\'e metric on $\Delta$, the distance from $z$ to the cycle $\{w; |w|=r\}$ is $\sigma(|z|)-\sigma(r)$. Denote by $P(z, \sigma(|z|)-\sigma(r))$ the disc
(w.r.t the Poincar\'e metric on $\Delta$) with center $z$ and radius $\sigma(|z|)-\sigma(r)$, then we have $P(z, \sigma(|z|)-\sigma(r))\subset A_r$. Choose a conformal map $f: \Delta\rightarrow \Delta$ such that $f(z)=0$. Since $f$ preserves the Poincar\'e metric on $\Delta$, it maps $P(z, \sigma(|z|)-\sigma(r))$  onto the disc (w.r.t the Poincar\'e metric on $\Delta$) with center $f(z)=0$ and radius $\sigma(|z|)-\sigma(r)$, which is just the Euclidean disc with center 0 and radius $\sigma^{-1}(\sigma(|z|)-\sigma(r))$. This implies that
\be\label{squeezing function of annuli}
s_{A_r}(z)\geq \sigma^{-1}(\sigma(|z|)-\sigma(r)).
\ee
Note that $\sigma^{-1}(\sigma(|z|)-\sigma(r))\rightarrow 1$ as $|z|\rightarrow 1$, so  $s_{A_r}(z)$ tends to 1 as $|z|\rightarrow 1$.
Consider the holomorphic automorphism of $A_r$ given by $z\mapsto r/z$,  by the conformal invariance of $s_{A_r}$, we also get $s_{A_r}(z)$ tends to 1 as $|z|\rightarrow r$.

By Theorem \ref{thm:continuity of squeezing functions},  $s_{A_r}$ is continuous and hence has a positive lower bound, so $A_r$ is a holomorphic homogeneous regular domain.
\end{proof}

By  similar argument, Theorem \ref{thm:squeezing function of annuli} can be generalized to finitely connected planar domains as follows:

\begin{thm}\label{thm:fin. connected not point}
Let $D$ be a  domain in $\mathbb{C}$, assume that $\bar{\mathbb{C}}-D$ has finitely many connected components such that each connected component is not a single point. Then we have
$$\lim_{z\rightarrow \partial D} s_D(z) = 1.$$
In particular, $D$ is a holomorphic homogeneous regular domain.
\end{thm}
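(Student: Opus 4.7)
The plan is to mimic the annulus argument of Theorem \ref{thm:squeezing function of annuli}, replacing the explicit conformal identification used there with a Riemann map adapted to each boundary component. Write $\overline{\mathbb{C}}\setminus D = K_1\cup\cdots\cup K_m$ with each $K_j$ a continuum having more than one point, and for each $j$ let $\Omega_j$ denote the connected component of $\overline{\mathbb{C}}\setminus K_j$ that contains $D$. A direct check shows $\Omega_j$ is simply connected (its complement in $S^2$ is connected, since every other component of $S^2\setminus K_j$ has closure meeting the connected set $K_j$), and this complement contains the more-than-one-point set $K_j$. Hence the Riemann mapping theorem supplies a conformal equivalence $\phi_j\colon\Omega_j\to\Delta$, whose restriction is a holomorphic embedding $\phi_j\colon D\hookrightarrow\Delta$ with image $\Delta\setminus\bigcup_{i\colon K_i\subset\Omega_j}\phi_j(K_i)$, i.e.\ the disc minus a fixed finite collection of compact sets.

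For each $z\in D$, precompose with a M\"obius automorphism $M_z$ of $\Delta$ carrying $\phi_j(z)$ to $0$, and set $\psi_{j,z}=M_z\circ\phi_j\colon D\hookrightarrow\Delta$, so that $\psi_{j,z}(z)=0$. The key step is to track $\psi_{j,z}(D)$ as $z\to K_j$ from within $D$. Since $\phi_j$ is a homeomorphism from $\Omega_j$ onto $\Delta$ and $z$ eventually leaves every compact subset of $\Omega_j$, we have $|\phi_j(z)|\to 1$, so the Poincar\'e distance from $\phi_j(z)$ to each compact set $\phi_j(K_i)\subset\Delta$ tends to infinity. Because $M_z$ is a Poincar\'e isometry sending $\phi_j(z)$ to $0$, the Poincar\'e distance from $0$ to each $M_z(\phi_j(K_i))$ also tends to infinity, which forces these sets into $\{|w|>R_{j,z}\}$ with $R_{j,z}\to 1$. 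Hence $\Delta_{R_{j,z}}\subset\psi_{j,z}(D)$, giving $s_D(z)\ge R_{j,z}\to 1$ as $z\to K_j$.

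To upgrade this to $\lim_{z\to\partial D}s_D(z)=1$, I would argue by subsequences: any sequence $z_n\to\partial D$ has, by finiteness of the boundary components, a subsequence accumulating on a single $K_j$, along which the previous estimate gives $s_D(z_n)\to 1$. The holomorphic homogeneous regular conclusion is then immediate from continuity of $s_D$ (Theorem \ref{thm:continuity of squeezing functions}): the boundary limit, together with positivity and continuity of $s_D$ on the compact set $\{z\in D:\operatorname{dist}(z,\partial D)\ge\varepsilon\}$ for small $\varepsilon>0$, yields $\inf_D s_D>0$. The principal obstacle I foresee is the possibility of wild boundary components $K_j$; the argument avoids any boundary regularity of $\phi_j$ by using only the $\operatorname{Aut}(\Delta)$-invariance of the Poincar\'e metric together with the topological homeomorphism property of the Riemann map.
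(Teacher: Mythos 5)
Your proposal is correct and follows essentially the same route as the paper's own proof: for each boundary component $K_j$ you uniformize the simply connected domain obtained by filling in the other components (the paper's $D_1=\bar{\mathbb{C}}-E_1$, your $\Omega_j$, which coincide), observe that $D$ maps to $\Delta$ minus finitely many compact sets, and use the blow-up of the Poincar\'e distance from $\phi_j(z)$ to those compact sets together with a recentering automorphism of $\Delta$ to force $s_D(z)\to 1$, finishing with continuity of $s_D$. The only difference is cosmetic: you justify simple connectivity of $\Omega_j$ and the passage to a single component via subsequences, steps the paper asserts without detail.
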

\begin{proof}
We define the function $\sigma$ as in the proof of  Theorem \ref{thm:squeezing function of annuli}. Let $E_1, \cdots , E_n$ be  connected components of $\bar{\mathbb{C}}-D$ . Then $D_1:=\bar{\mathbb{C}}-E_1$ is simply connected and $D\subset D_1$. Since $E_1$ is not a single point, by  Riemann mapping theorem, there is a conformal map $\varphi_1$ from $D_1$ to $\Delta$. It is clear that $\varphi_1(D)$ is a domain in $\Delta$ obtained by deleting $n-1$ connected compact subsets, say $L_2, \cdots , L_n$, from $\Delta$.

Let $z \in D $ and let $l_z = P_\Delta(z , \cup_{i=2}^nL_i)$  be the distance from $z$ to $\cup_{i=2}^nL_i$ with respect to the Poincare distance of $\Delta$. Choose a conformal map $f: \Delta\rightarrow \Delta$ such that $f(z)=0$, then the Euclidean disc with center $0$ and radius $\sigma^{-1}(l_z)$ is contained in $f(\varphi(D))$, which implies that $s_{\varphi(D)}(z)\geq \sigma^{-1}(l_z)$. If $|z|$ tends to 1, then $l_z$ tends to $\infty$ and $\sigma^{-1}(l_z)$ tends to 1, hence $s_{\varphi(D)}(z)$ tends to 1.
By  holomorphic invariance of squeezing functions, we see that
$$\lim_{z\rightarrow E_1} s_D(z) = 1.$$
Similarly, for $E_i$ with $2\leq i\leq n$, we also have
 $$\lim_{z\rightarrow E_i} s_D(z) = 1.$$
Hence
$$\lim_{z\rightarrow \partial D} s_D(z) = 1.$$

By continuity of $s_D$,  it admits a positive lower bound on $D$, so $D$ is a holomorphic homogeneous regular domain.
\end{proof}

Using Riemann mapping theorem, one can prove that the domains considered in the above theorem are holomorphic equivalent to bounded domains with smooth boundary(see e.g. \cite{Ahlfors1}). Hence an equivalent version of the above theorem is the following:

\begin{thm}\label{thm:smooth planar domain}
Let $D$ be a bounded domain in $\mathbb{C}$ with smooth boundary. Then we have
$$\lim_{z\rightarrow \partial D} s_D(z) = 1$$
In particular, $D$ is a holomorphic homogeneous regular domain.
\end{thm}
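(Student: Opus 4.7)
The plan is to deduce Theorem \ref{thm:smooth planar domain} directly from the already established Theorem \ref{thm:fin. connected not point}, since a bounded planar domain with smooth boundary automatically satisfies the hypotheses of that earlier result. The only conceptual step is to verify that ``smoothly bounded'' forces the complement in $\bar{\mathbb{C}}$ to consist of finitely many non-degenerate pieces.

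First I would argue that the boundary $\partial D$, being a smooth compact $1$-submanifold of $\mathbb{C}$, decomposes as a disjoint union of finitely many smooth Jordan curves $\gamma_1,\dots,\gamma_m$. By the Jordan curve theorem each $\gamma_i$ separates $\bar{\mathbb{C}}$ into two genuine topological disks, so each connected component $E_i$ of $\bar{\mathbb{C}}\setminus D$ contains (in fact, is bounded by) one of these Jordan curves and therefore has non-empty interior; in particular no $E_i$ is a single point. Thus the complement $\bar{\mathbb{C}}\setminus D$ has finitely many components, none of which is a singleton, exactly the hypothesis of Theorem \ref{thm:fin. connected not point}.

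Applying Theorem \ref{thm:fin. connected not point} then immediately gives $\lim_{z\to\partial D} s_D(z)=1$. For the ``in particular'' clause, I would invoke the continuity of $s_D$ from Theorem \ref{thm:continuity of squeezing functions}: since $s_D$ is positive and continuous on $D$ and tends to $1$ at every boundary point, it extends continuously to $\overline{D}$ by the value $1$ on $\partial D$; the extended function is continuous on the compact set $\overline{D}$ and strictly positive there, so it attains a positive minimum, which is then a positive lower bound for $s_D$ on $D$. This shows $D$ is a holomorphic homogeneous regular domain.

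There is essentially no obstacle in this proof: the substantive content is entirely in Theorem \ref{thm:fin. connected not point}, and the present statement is just the repackaging of that theorem under a hypothesis that is a priori stronger (smooth boundary) but in this finite-type setting conformally equivalent, as the authors note via the reference to \cite{Ahlfors1}. The only point meriting care is the routine observation that smoothness of $\partial D$ rules out point components of the complement.
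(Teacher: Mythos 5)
Your proof is correct and follows essentially the same route as the paper: the paper also obtains this theorem as a direct repackaging of Theorem \ref{thm:fin. connected not point}, observing (via the conformal-equivalence remark citing \cite{Ahlfors1}) that smoothly bounded planar domains fall under that theorem's hypotheses, and then using continuity of $s_D$ for the positive lower bound. Your explicit verification that a smooth compact boundary consists of finitely many Jordan curves, so no complement component is a point, is exactly the ``only point meriting care'' and is the elementary direction of the equivalence the paper invokes.
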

\begin{rem}
As a consequence of Theorem \ref{thm:smooth planar domain},  for a bounded planar domain $D$ with smooth boundary, $s_D$ can be extended continuously to $\bar{D}$. It may be interesting to investigate whether the same result holds in higher dimensions.
\end{rem}

In section 3, we have shown that the Carath\'{e}odory metric, the Kobayashi metric, and the Bergman metric on a holomorphic homogeneous regular domain are complete. As mentioned in the introduction, these intrinsic metrics on a holomorphic homogeneous regular domain are equivalent \cite{Liu}. It is also turned out  that any holomorphic homogeneous regular domain is hyperconvex \cite{Yeung}.  So, as a result of  Theorem \ref{thm:smooth planar domain}, we can recover the following results in complex analysis:

\begin{thm}
Let $D$ be a planar domain with smooth boundary, then we have:\\
(1) The Carath\'{e}odory metric, the Kobayashi metric and the Bergman metric on $D$ are complete;\\
(2) The Carath\'{e}odory metric , the Kobayashi metric and the Bergman metric  on D are equivalent.\\
(3) $D$ is hyperconvex.
\end{thm}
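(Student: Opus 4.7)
The plan is to assemble the theorem directly from results already in hand: Theorem \ref{thm:smooth planar domain} guarantees that any smoothly bounded planar domain $D$ is a holomorphic homogeneous regular domain, and the three assertions then follow from what is known about such domains.

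First, I would invoke Theorem \ref{thm:smooth planar domain} to conclude that $s_D(z)\to 1$ as $z\to \partial D$. Combined with the continuity of $s_D$ established in Theorem \ref{thm:continuity of squeezing functions}, this forces $s_D$ to attain a positive lower bound on $D$, so $D$ is holomorphic homogeneous regular. This single observation is the common input for all three conclusions, so I would state it once at the start of the proof and refer back to it.

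Next, for part (1), I would directly apply Theorem \ref{thm:Caro. metic on holomorphic homogeneous regular domain complete} and Theorem \ref{thm:intrinsic metric on holomorphic homogeneous regular domain complete}, which give completeness of the Carath\'eodory, Kobayashi, and Bergman metrics on any holomorphic homogeneous regular domain. For part (2), I would cite the equivalence of the three intrinsic metrics on holomorphic homogeneous regular domains proved in \cite{Liu}, as already recalled in the introduction. For part (3), I would cite the hyperconvexity of holomorphic homogeneous regular domains proved in \cite{Yeung}.

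There is essentially no obstacle here beyond quoting the earlier results correctly; the whole content of the theorem has been front-loaded into Theorem \ref{thm:smooth planar domain} together with the cited facts about holomorphic homogeneous regular domains. The only point requiring a line of care is making sure that continuity of $s_D$ on $D$, together with the boundary limit $s_D\to 1$, indeed forces a uniform positive lower bound: one extends $s_D$ continuously to $\overline{D}$ by setting $s_D\equiv 1$ on $\partial D$, and compactness of $\overline{D}$ together with positivity of $s_D$ yields the required bound. After that step, the three items are simply applications of Theorems \ref{thm:Caro. metic on holomorphic homogeneous regular domain complete} and \ref{thm:intrinsic metric on holomorphic homogeneous regular domain complete} and the cited results from \cite{Liu} and \cite{Yeung}.
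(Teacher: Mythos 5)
Your proposal is correct and follows exactly the paper's own route: Theorem \ref{thm:smooth planar domain} (with continuity from Theorem \ref{thm:continuity of squeezing functions}) makes $D$ holomorphic homogeneous regular, and then (1) follows from Theorems \ref{thm:Caro. metic on holomorphic homogeneous regular domain complete} and \ref{thm:intrinsic metric on holomorphic homogeneous regular domain complete}, (2) from \cite{Liu}, and (3) from \cite{Yeung}. The paper treats this as immediate and gives no separate proof, so there is nothing further to reconcile.
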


By definition, it is clear that the product of two holomorphic homogeneous regular domains is again a holomorphic homogeneous regular domain. So we get
\begin{cor}\label{cor:product of planar domains}
Let $D$ be a bounded domain in $\mathbb{C}^n$ which is holomorphic equivalent to the product of bounded planar domains with smooth boundary, then $D$ is a holomorphic homogeneous regular domain.
\end{cor}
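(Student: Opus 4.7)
The plan is to combine Theorem \ref{thm:smooth planar domain} with the stability of the holomorphic homogeneous regular property under finite products, and to produce a uniform lower bound on the squeezing function explicitly. Since squeezing functions are biholomorphic invariants, I may replace $D$ by $D_1 \times \cdots \times D_n$, with each $D_j \subset \mathbb{C}$ a bounded planar domain with smooth boundary. By Theorem \ref{thm:smooth planar domain}, each $D_j$ is a holomorphic homogeneous regular domain, so there exist constants $c_j > 0$ with $s_{D_j} \geq c_j$ on $D_j$. (Theorem \ref{thm:smooth planar domain} only asserts the boundary limit is $1$, but combined with the continuity result of Theorem \ref{thm:continuity of squeezing functions}, the infimum on $D_j$ is automatically positive.)

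Next I would argue by induction on $n$, which reduces the problem to showing that the product $D_1 \times D_2$ of two holomorphic homogeneous regular domains (with lower bounds $c_1, c_2$) is again such a domain. Fix a point $(p_1, p_2) \in D_1 \times D_2$. By Theorem \ref{thm:existence of ext}, for $j = 1, 2$ choose extremal embeddings $f_j : D_j \to \Delta$ with $f_j(p_j) = 0$ and $\Delta_{c_j} \subset f_j(D_j)$. Define
\[
F(z_1, z_2) := \tfrac{1}{\sqrt{2}}\bigl(f_1(z_1), f_2(z_2)\bigr).
\]
Since $|f_1(z_1)|^2 + |f_2(z_2)|^2 < 2$ on $D_1 \times D_2$, the map $F$ is a holomorphic embedding into $B^2$ sending $(p_1, p_2)$ to $0$.

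Finally I would verify that $B^2\bigl(0, \min(c_1, c_2)/\sqrt{2}\bigr) \subset F(D_1 \times D_2)$: any $(w_1, w_2)$ in this ball satisfies $|\sqrt{2}\,w_j| < c_j$ for each $j$, hence $\sqrt{2}\,w_j \in \Delta_{c_j} \subset f_j(D_j)$, producing a preimage in the product. This yields $s_{D_1 \times D_2}(p_1, p_2) \geq \min(c_1, c_2)/\sqrt{2}$, a positive lower bound independent of the base point. There is no serious obstacle; the only point requiring attention is the rescaling factor $1/\sqrt{2}$, forced because a product of unit balls is not a unit ball but sits inside one of radius $\sqrt{2}$. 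Iterating gives an analogous factor $1/\sqrt{n}$ for the full $n$-fold product, still uniformly positive, which is all that is needed.
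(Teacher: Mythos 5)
Your proof is correct and takes essentially the same route as the paper: invoke Theorem \ref{thm:smooth planar domain} for each factor and then use stability of the holomorphic homogeneous regular property under products, a step the paper asserts as ``clear by definition'' and which your rescaled embedding $F = \tfrac{1}{\sqrt{2}}\bigl(f_1, f_2\bigr)$ makes explicit and rigorous. One small correction: iterating the two-factor construction gives the lower bound $\bigl(\min_j c_j\bigr)\,2^{-(n-1)/2}$ rather than $\bigl(\min_j c_j\bigr)/\sqrt{n}$ --- the factor $1/\sqrt{n}$ arises only if you rescale all $n$ factors simultaneously via $F=\tfrac{1}{\sqrt{n}}(f_1,\dots,f_n)$ --- but either constant is positive, which is all the corollary needs.
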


As mentioned in the introduction, the list of known holomorphic homogeneous regular domains  contains bounded homogeneous domains , Teichm\"uller spaces, bounded domains covering compact K\"ahler manifolds and strictly convex domains with $C^2$-boundary.
Many examples of holomorphic homogeneous regular domains given by  Corollary \ref{cor:product of planar domains} are not in the list . More precisely, we have the following

\begin{prop}
Let $D_1, \cdots , D_k$ be bounded planar domains with smooth boundaries which are mutually not conformal equivalent. If there exists a $D_i$ which is not conformal equivalent to the unit disc $\Delta$, then the domain $D:= D_1^{r_1}\times\cdots\times D_k^{r_k}$ is a holomorphic homogeneous regular domain which  is not holomorphic equivalent to any of the domains in the above list. Where $r_1, \cdots , r_k$ are positive integers and $D_i^{r_i}= D_i\times\cdots\times D_i$ is the $r_i$-power of $D_i$.
\begin{proof}
Denote by $Aut(D)$ the holomorphic automorphism group of $D$. By the proposition in \cite{Royden} and Theorem 1 in \cite{Urata}, we have
$$Aut(D) = Aut(D_1^{r_1})\times\cdots\times Aut(D_k^{r_k}),$$
and, for each $i$, the following sequence is exact:
$$1\rightarrow (Aut(D_i))^{r_i}\rightarrow Aut(D_i^{r_i})\rightarrow S_{r_i}\rightarrow 1,$$
where $S_{r_i}$ is the symmetry group of degree $r_i$ which acts on $D_i^{r_i}$   by permutation.
The decomposition of $Aut(D)$ implies that  $D$ is homogeneous if and only if each factor $D_i$ is homogeneous, and $D$ can cover a compact complex manifold is and only if so does for each $D_i$. Note that a smoothly bounded planar domain is homogeneous or can cover a compact complex manifold if and only if it is isomorphic to $\Delta$, hence $D$ can not be homogeneous or cover a compact complex manifold.

It is clear that $D$ can not be holomorphic equivalent to any convex domain since the fundamental group of $D$ is nontrivial. By the same reason, $D$ is not holomorphic equivalent to any Teichm\"uller space since it is well known that all Teichm\"uller spaces are contractible.
\end{proof}
\end{prop}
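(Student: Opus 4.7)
The plan splits into an easy positive claim and a more substantial negative claim.

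The positive part — that $D$ is a holomorphic homogeneous regular domain — is immediate from Corollary \ref{cor:product of planar domains}, since each smoothly bounded planar factor $D_i$ is holomorphic homogeneous regular by Theorem \ref{thm:smooth planar domain}, and the class is closed under finite products. So all the real work lies in showing $D$ is not biholomorphic to any member of the four listed families.

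For the two group-theoretic exclusions (bounded homogeneous domains and covers of compact K\"ahler manifolds), I would first invoke the cited results of Royden and Urata to obtain the decomposition
$$Aut(D) = Aut(D_1^{r_1})\times\cdots\times Aut(D_k^{r_k}),$$
together with, for each factor, the exact sequence
$$1 \to Aut(D_i)^{r_i} \to Aut(D_i^{r_i}) \to S_{r_i} \to 1.$$
Two observations about smoothly bounded planar domains then do the work: (a) such a domain has a transitive automorphism group only if it is conformally equivalent to $\Delta$, and (b) such a domain admits a discrete subgroup of automorphisms with compact quotient only if it is conformally equivalent to $\Delta$. Both are classical: a multiply connected smoothly bounded planar domain has automorphism group which is either finite or, in the annulus case, the one-dimensional Lie group $S^1 \rtimes \mathbb{Z}/2$, which neither acts transitively nor contains any freely acting cocompact lattice. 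Feeding (a) into the product decomposition forces $D$ homogeneous to imply each $D_i$ homogeneous and hence $\cong \Delta$, contradicting the hypothesis that some $D_i$ is not; feeding (b) in — after passing to the finite-index subgroup of a putative cocompact lattice that lies in $\prod_i Aut(D_i)^{r_i}$ via the above exact sequence, and projecting to each $Aut(D_i)$-factor — rules out $D$ covering a compact complex manifold.

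The remaining two exclusions are topological. Since at least one $D_i$ is smoothly bounded and not conformally equivalent to $\Delta$, it is multiply connected and hence has nontrivial fundamental group, whence $\pi_1(D)$ is nontrivial and $D$ is not contractible. This immediately rules out $D$ being biholomorphic to a bounded convex domain (which is contractible by star-shapedness about any interior point) or to a Teichm\"uller space (which is contractible by classical Teichm\"uller theory). I expect the step needing the most care to be the covering-a-compact-manifold clause: one must argue that a cocompact lattice in the full product group $Aut(D)$ actually descends to cocompact lattices in each factor $Aut(D_i)$, and the exact sequence together with the finite-index reduction to $\prod_i Aut(D_i)^{r_i}$ is exactly the mechanism for this, after which the one-variable obstruction (b) closes the argument.
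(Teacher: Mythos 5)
Your strategy coincides with the paper's: the Royden--Urata decomposition of $Aut(D)$ with its exact sequences, reduction to one-variable facts about smoothly bounded planar domains for the homogeneity and covering exclusions, and the fundamental-group/contractibility obstruction for convex domains and Teichm\"uller spaces; the positive part via Corollary \ref{cor:product of planar domains} is also what the paper intends (it leaves it implicit). The homogeneity, convexity and Teichm\"uller parts of your argument are sound. The gap is in the covering clause, exactly the step you yourself single out as delicate: the principle you rely on --- that a cocompact lattice in $Aut(D)$, after the finite-index reduction into $\prod_i Aut(D_i)^{r_i}$, ``descends to cocompact lattices in each factor $Aut(D_i)$'' --- is false in general. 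Projections of a discrete subgroup of a product group to its factors need not be discrete: irreducible cocompact lattices in $PSL_2(\mathbb{R})\times PSL_2(\mathbb{R})$ exist (this is precisely how $\Delta\times\Delta$ covers compact complex surfaces), and their projections to either factor are dense. So your observation (b), which is stated for \emph{discrete} subgroups acting with compact quotient, cannot be applied to the projected group, and the argument as written does not close.

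The mechanism that actually works is not descent of lattices but compactness of the automorphism groups of the non-disc factors. If $D_i$ is smoothly bounded and multiply connected, then $Aut(D_i)$ is compact (finite, or $S^1\rtimes\mathbb{Z}/2$ for an annulus), as you note. Suppose $\Gamma_0\subset\prod_i Aut(D_i)^{r_i}$ acts on $D$ with compact quotient, and let $H$ be its (possibly non-discrete) projection to one copy of $Aut(D_i)$ with $D_i\not\cong\Delta$. Inclusion of orbits gives continuous surjections $D/\Gamma_0\rightarrow D_i/H\rightarrow D_i/Aut(D_i)$, so $D_i/Aut(D_i)$ would be compact; but since $Aut(D_i)$ is compact, the quotient map $D_i\rightarrow D_i/Aut(D_i)$ is proper, so $D_i/Aut(D_i)$ is noncompact (for an annulus $A_r$ it is the half-open interval $[\sqrt{r},1)$, and for connectivity at least three the group is finite). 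This contradiction needs no discreteness, no freeness, and no lattice theory. To be fair, the paper is itself terse at this very point --- it asserts that the decomposition of $Aut(D)$ alone implies the covering equivalence, which the irreducible-lattice phenomenon shows cannot be taken literally --- so you correctly located the weak point of the proof, but the patch you propose is not the one that repairs it.
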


For general finitely connected planar domains, the boundary behavior of their squeezing functions can be described as follows:

\begin{thm}\label{thm:fin. connected with point}
Let $D$ be a finitely connected planar domain, let $E$ be a connected component of $\bar{\mathbb{C}}- D$: \\
(1)  if $E$ is not a single point, then
$$\lim_{D\ni z\rightarrow E}s_D(z)=1;$$
(2) if $E = \{p\}$ contains a single point, then
 $$s_D(z)\leq \sigma^{-1}(K_{\tilde{D}}(z ,p)),\ \ z\in D,$$
where $\sigma$ is defined as in the proof of Theorem \ref{thm:squeezing function of annuli} and $K_{\tilde{D}}(\cdot , \cdot)$ is the Kobayashi distance on the domain $\tilde{D}:= D\cup\{p\}$. In particular,
 $$\lim_{D\ni z\rightarrow p}s_D(z)=0.$$
\end{thm}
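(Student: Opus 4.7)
The plan is to handle the two parts separately: part (1) is a straightforward adaptation of the proof of Theorem \ref{thm:fin. connected not point}, while part (2) exploits that $p$ is an isolated component of $\bar{\mathbb{C}}\setminus D$ via Riemann's removable singularity theorem applied to the extremal embedding.

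For part (1), I would run essentially the same argument as in Theorem \ref{thm:fin. connected not point}, with the observation that it only uses that the distinguished component $E$ is not a single point; the other components $E_2,\dots,E_n$ are allowed to be singletons. Namely, since $E$ is not a point, $D_1:=\bar{\mathbb{C}}\setminus E$ is simply connected and, by the Riemann mapping theorem, there is a conformal map $\varphi\colon D_1\to \Delta$. Then $\varphi(D)=\Delta\setminus\bigcup_{i\geq 2}\varphi(E_i)$, where $K:=\bigcup_{i\geq 2}\varphi(E_i)$ is a compact subset of $\Delta$ (some pieces being singletons). For $z\in D$, composing $\varphi$ with an automorphism of $\Delta$ sending $\varphi(z)$ to $0$ gives a holomorphic embedding of $D$ into $\Delta$ whose image contains the Euclidean disc of radius $\sigma^{-1}(l_z)$, where $l_z$ is the Poincar\'e distance from $\varphi(z)$ to $K$. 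Thus $s_D(z)\geq \sigma^{-1}(l_z)$. Since $\varphi^{-1}\colon \Delta\to D_1$ is a homeomorphism and $E$ is closed in $\bar{\mathbb{C}}$, any sequence in $D$ approaching $E$ is pushed by $\varphi$ to a sequence in $\Delta$ with $|\varphi(z)|\to 1$; hence $l_z\to\infty$, and $s_D(z)\to 1$.

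For part (2), by Theorem \ref{thm:existence of ext} fix an extremal embedding $f\colon D\to\Delta$ with $f(z)=0$ and $\Delta_{s_D(z)}\subset f(D)$. Since $\{p\}$ is an isolated component of $\bar{\mathbb{C}}\setminus D$, there is a punctured neighborhood of $p$ contained in $D$, and $\tilde{D}=D\cup\{p\}$ is a bounded domain in $\mathbb{C}$. The boundedness of $f$ and Riemann's removable singularity theorem produce a holomorphic extension $\tilde{f}\colon \tilde{D}\to\bar{\Delta}$; since $\tilde{f}$ is non-constant, the maximum principle forces $|\tilde{f}(p)|<1$. The key step is to show $\tilde{f}(p)\notin f(D)$: were $\tilde{f}(p)=f(w_0)$ with $w_0\in D$, then by openness of $\tilde{f}$ near $p$ and of $f$ near $w_0$ one would find, for any neighborhood $V\subset\tilde{D}$ of $p$ disjoint from a small injectivity neighborhood $U$ of $w_0$, overlapping open sets $\tilde{f}(V\setminus\{p\})$ and $f(U)$ around $f(w_0)$, yielding two distinct preimages in $D$ of a common value and contradicting injectivity of $f$. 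Hence $|\tilde{f}(p)|\geq s_D(z)$, and the distance-decreasing property of the Kobayashi distance applied to $\tilde{f}\colon\tilde{D}\to\Delta$ gives
$$\sigma(s_D(z))\leq \sigma(|\tilde{f}(p)|)=K_{\Delta}(0,\tilde{f}(p))\leq K_{\tilde{D}}(z,p),$$
which is the desired inequality. The limit $s_D(z)\to 0$ as $z\to p$ then follows because $\tilde{D}$ is a bounded domain in $\mathbb{C}$, so $K_{\tilde{D}}$ is continuous and induces the Euclidean topology, giving $K_{\tilde{D}}(z,p)\to 0$.

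The main obstacle is the verification that $\tilde{f}(p)\notin f(D)$; everything else is either bookkeeping or a direct application of classical tools (Riemann mapping, Poincar\'e geometry of $\Delta$, distance-decreasing property of $K$). I expect that step to be short but delicate, since it is the only place where one genuinely uses that $p$ is added to $D$ rather than being a generic boundary point, and where the injectivity of $f$ is converted into a concrete separation of values.
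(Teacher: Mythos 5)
Your proof is correct and follows essentially the same route as the paper: part (1) adapts the Riemann-mapping argument of Theorem \ref{thm:fin. connected not point} to the case where only the distinguished component $E$ is required to be non-degenerate, and part (2) extends the embedding across the puncture by Riemann's removable singularity theorem and then applies the distance-decreasing property of the Kobayashi distance. The only differences are cosmetic: you work with the extremal embedding supplied by Theorem \ref{thm:existence of ext} rather than an arbitrary embedding (so no supremum is needed at the end), and you supply the open-mapping/injectivity argument showing $\tilde{f}(p)\notin f(D)$, a step the paper's proof dismisses as clear.
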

\begin{proof}
The proof of (1) is similar to the proof of Theorem \ref{thm:fin. connected not point} and we will not repeat it here. Now we give the proof of (2).
 Let $z\in D$, and $f_z: D\rightarrow \Delta$ be a holomorphic  embedding such that $f_z(z)=0$. Note that $\tilde{D}$ is a domain. By Riemann's removable singularity theorem, $f_z$ can be extended as a holomorphic map $\tilde{f_z}$ form $\tilde{D}$ to $\Delta$. It is clear that $\tilde{f_z}(p)\not\in f(D)$. By the decreasing property of Kobayashi distances, we have $K_{\Delta}(0 , \tilde{f_z}(p))\leq K_{\tilde{D}}(z ,p)$, hence $s_D(z)\leq \sigma^{-1}(K_{\tilde{D}}(z ,p)).$
\end{proof}

\begin{rem}
Theorem \ref{thm:fin. connected with point} implies that a  finitely connected planar domain is a holomorphic homogeneous regular domain if and only if each connected component of its complement in $\bar{\mathbb{C}}$ is not a single point.
\end{rem}

The  examples of planar holomorphic homogeneous regular domains given by Theorem \ref{thm:fin. connected not point}  are all finitely connected, i.e, their complement in $\bar{\mathbb{C}}$ have finitely many connected components. We can also construct a class of planar holomorphic homogeneous regular domains which are infinitely connected.

Let $Aut(\Delta)$ be the holomorphic automorphism group of $\Delta$, and denote by $\Delta_{r}$  the disc in $\mathbb{C}$ with center $0$ and radius $r$ and $\overline{\Delta}_r$ its closure. We first prove the following lemma:

\begin{lem}\label{lem:useful constants}
For any   positive numbers $u$, $v$ and $w$ with $u<v<w<1$, there exists a positive number $c(u,v,w)$ such that:
for an arbitrary positive number $r$ with $u<r< v$, let $D\subset A_r$ be a domain  containing  $\Delta_{w}\setminus\overline{\Delta}_{r}$, then, for $z\in \Delta_{v}\setminus\overline{\Delta}_{r}$, we have $s_D(z)\geq c(u,v,w).$
\end{lem}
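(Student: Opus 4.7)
The plan is to exhibit a single holomorphic embedding $f:D\to\Delta$ with $f(z)=0$ whose image contains a Euclidean disc about $0$ of radius bounded below by an explicit constant depending only on $u,v,w$. The key idea is to precompose with the involutive automorphism $\tau(\zeta):=r/\zeta$ of $A_r$, which swaps its two boundary circles: $\tau$ sends the annulus $\Delta_w\setminus\overline{\Delta}_r\subset D$ to $\{r/w<|\zeta|<1\}=\Delta\setminus\overline{\Delta}_{r/w}$, and it sends the point $z$ (with $r<|z|<v$) to $\tau(z)$ with $r/v<|\tau(z)|<1$. Setting $D':=\tau(D)$, we then have $D'\subset\Delta$ and $D'\supset\Delta\setminus\overline{\Delta}_{r/w}$.

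Composing with the M\"obius automorphism $\varphi\in\operatorname{Aut}(\Delta)$ carrying $\tau(z)$ to $0$ yields a holomorphic embedding $f:=\varphi\circ\tau|_D:D\to\Delta$ with $f(z)=0$, and
\[
f(D)\supset\varphi(\Delta\setminus\overline{\Delta}_{r/w})=\Delta\setminus\varphi(\overline{\Delta}_{r/w}).
\]
Because $\varphi$ is a Poincar\'e isometry of $\Delta$, and the Poincar\'e disc of radius $R$ about $0$ coincides with the Euclidean disc $\Delta(0,\sigma^{-1}(R))$, we have $\Delta(0,\sigma^{-1}(R))\subset f(D)$ for every $R\leq d_\Delta(\tau(z),\overline{\Delta}_{r/w})$. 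The latter distance is computed exactly as in the proof of Theorem~\ref{thm:squeezing function of annuli}: by rotational invariance and a direct calculation along the real axis, it equals $\sigma(|\tau(z)|)-\sigma(r/w)>\sigma(r/v)-\sigma(r/w)$.

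Finally, a direct derivative computation shows that $r\mapsto\sigma(r/v)-\sigma(r/w)$ is strictly increasing on $(0,v)$: the sign of the derivative reduces to $(w-v)(vw+r^2)>0$. Hence on $(u,v)$ this quantity is bounded below by $\sigma(u/v)-\sigma(u/w)>0$, and setting
\[
c(u,v,w):=\sigma^{-1}\bigl(\sigma(u/v)-\sigma(u/w)\bigr)
\]
yields $s_D(z)\geq c(u,v,w)$, as required.

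The main obstacle is that the naive route---the inclusion $D\hookrightarrow\Delta$ post-composed with a M\"obius map centered at $z$---does \emph{not} give a uniform bound, since the Poincar\'e distance from $z$ to the inner circle $\{|\zeta|=r\}$ degenerates as $|z|\to r^+$. The point of inserting $\tau$ first is that it relocates this problematic inner circle to $\partial\Delta$, which is automatically at infinite Poincar\'e distance, so that only the single circle $\{|\zeta|=r/w\}$ remains as an obstruction, and this one stays a definite hyperbolic distance away from $\tau(z)$ uniformly in $r\in(u,v)$.
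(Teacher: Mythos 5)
Your proof is correct and follows essentially the same route as the paper's: both precompose with the inversion $\zeta\mapsto r/\zeta$ to relocate $z$ near the outer boundary circle, bound the Poincar\'e distance from the image point to $\overline{\Delta}_{r/w}$ below by $\sigma(r/v)-\sigma(r/w)$, and apply a M\"obius map centered at the image point to conclude $s_D(z)\geq \sigma^{-1}\bigl(\sigma(r/v)-\sigma(r/w)\bigr)$. The only difference is cosmetic: the paper sets $c(u,v,w)=\inf_{u\leq r\leq v}\sigma^{-1}\bigl(\sigma(r/v)-\sigma(r/w)\bigr)$ and asserts its positivity, whereas you make the constant explicit as $\sigma^{-1}\bigl(\sigma(u/v)-\sigma(u/w)\bigr)$ via the (correct) monotonicity computation.
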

\begin{proof}
Consider the reflection $R: D\rightarrow \Delta$ given by
$$z\mapsto r/z ,$$
It is clear that $z\in R(D)$ provided $r/w < |z|<1$. For $z\in \Delta_{v}\setminus\overline{\Delta}_{r}$, we have
$$r/v<|R(z)|<1,$$
hence  the disc (with respect to the Poincare metric) with center $z$ and radius $\sigma(r/v)-\sigma(r/w)$ is contained in $R(D)$.
So we see that
$$s_{R(D)}(R(z))< \sigma^{-1}(\sigma(r/v)-\sigma(r/w))$$
for $z\in \Delta_{v}\setminus\overline{\Delta}_{r}$. By the biholomorphic invariance of squeezing functions, we get
$$s_D(z)\geq \sigma^{-1}(\sigma(r/v)-\sigma(r/w))$$
for $z\in \Delta_{v}\setminus\overline{\Delta}_{r}$. Take
$$c(u, v, w)= \inf_{u\leq r\leq v}\{\sigma^{-1}(\sigma(r/v)-\sigma(r/w))\},$$
it is clear that $c(u, v, w)>0$ and it satisfies the condition of the lemma.
\end{proof}

By the above lemma, we can prove the following

\begin{thm}\label{thm:infitite connected domain}
Let $u$, $v$ and $w$  be positive numbers with $u<v<w<1$, let $r_k$, $k=1, 2, \cdots$, be a sequence of positive numbers satisfying $u<r_k<v$. Let $f_k$ be a sequence in $Aut(\Delta)$ such that $f_k(\overline{\Delta}_{w})$ are pairwise disjoint. Then the domain
$$D = \Delta\setminus(\cup_{k=1}^{\infty}f_k(\overline{\Delta}_{r_k}))$$
is a holomorphic homogeneous regular domain.
\end{thm}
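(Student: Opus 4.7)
The plan is to produce a positive uniform lower bound for $s_D$ on $D$, which by definition means $D$ is holomorphic homogeneous regular. The strategy is to combine Lemma \ref{lem:useful constants}, which handles points close to some hole, with a direct embedding by a disc automorphism, which handles points far from every hole. The one subtle choice is to set the threshold between the two cases \emph{strictly} above $v$; otherwise the ``far from every hole'' case collapses because the radii $r_k$ may approach $v$. Concretely, I would fix an auxiliary parameter $v' \in (v, w)$, say $v' = (v+w)/2$, and then consider two cases for each $z \in D$.

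\emph{Case 1: $z \in f_{k_0}(\Delta_{v'}) \setminus f_{k_0}(\overline{\Delta}_{r_{k_0}})$ for some $k_0$.} I would compose with $f_{k_0}^{-1}$: using that the $f_k(\overline{\Delta}_w)$ are pairwise disjoint, one verifies that $f_{k_0}^{-1}(D) \subset A_{r_{k_0}}$ and $f_{k_0}^{-1}(D) \supset \Delta_w \setminus \overline{\Delta}_{r_{k_0}}$. Since $r_{k_0} \in (u, v) \subset (u, v')$ and $v' < w$, Lemma \ref{lem:useful constants} with the triple $(u, v', w)$ in place of $(u, v, w)$ applies at the point $f_{k_0}^{-1}(z) \in \Delta_{v'} \setminus \overline{\Delta}_{r_{k_0}}$, and biholomorphic invariance of $s$ yields $s_D(z) \geq c(u, v', w) > 0$.

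\emph{Case 2: $z \notin f_k(\Delta_{v'})$ for every $k$.} Pick $g \in Aut(\Delta)$ with $g(z) = 0$ and take $g|_D : D \hookrightarrow \Delta$ as the embedding. With $\sigma$ as in the proof of Theorem \ref{thm:squeezing function of annuli} and $d_\Delta$ the Poincar\'e distance on $\Delta$, the hypothesis of Case 2 reads $d_\Delta(z, f_k(0)) \geq \sigma(v')$ for every $k$. The triangle inequality gives, for any $p \in f_k(\overline{\Delta}_{r_k})$, $d_\Delta(z, p) \geq \sigma(v') - \sigma(r_k) \geq \sigma(v') - \sigma(v) > 0$, and since $g$ is a hyperbolic isometry with $g(z) = 0$, this translates into $|g(p)| = \sigma^{-1}(d_\Delta(z, p)) \geq \sigma^{-1}(\sigma(v') - \sigma(v))$. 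Hence the Euclidean disc of radius $t := \sigma^{-1}(\sigma(v') - \sigma(v))$ around $0$ is contained in $g(D)$, so $s_D(z) \geq t$.

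Combining the two cases, $s_D \geq \min\{c(u, v', w),\, \sigma^{-1}(\sigma(v') - \sigma(v))\} > 0$ uniformly on $D$, so $D$ is holomorphic homogeneous regular. The main obstacle, and really the only nontrivial point, is the threshold issue highlighted at the start: splitting at $v$ itself would make Case 2's bound $\sigma(v) - \sigma(r_k)$ degenerate as $r_k \to v$, whereas passing to $v' > v$ keeps a uniform cushion $\sigma(v') - \sigma(v)$ while still respecting the hypothesis $r_k < v'$ required by Lemma \ref{lem:useful constants} in Case 1.
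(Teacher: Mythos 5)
Your proof is correct and follows essentially the same route as the paper: the paper also splits $D$ at the threshold $\frac{v+w}{2}$ (your $v'$), applies Lemma \ref{lem:useful constants} with the triple $(u,\frac{v+w}{2},w)$ to points lying in some $f_k(\Delta_{(v+w)/2})$, and handles the remaining points by noting their Poincar\'e distance to $\partial D$ exceeds $\sigma(\frac{v+w}{2})-\sigma(v)$ and composing with an automorphism sending the point to $0$. The ``threshold issue'' you flag is precisely why the paper uses $\frac{v+w}{2}$ rather than $v$, so your observation matches the paper's choice exactly.
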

\begin{proof}
Let $c(u,v,w)$ as in Lemma \ref{lem:useful constants}, denote $c(u, \frac{v+w}{2}, w)$ by $c$.  By  the above lemma and biholomorphic invariance of squeezing functions, we have $s_D(z)\geq c$ for
$$z\in D':= D\cap\left( \cup_{k=1}^{\infty} f_k(\Delta_{\frac{v+w}{2}})\right).$$
For $z\in D\setminus D'$, the distance form $z$ to $\partial D$ with respect to the Poincare distance on $\Delta$ is greater than $\sigma(\frac{v+w}{2})-\sigma (v)$. Take a conformal map $f\in Aut(\Delta)$ such that $f(z)=0$, we see that $s_D(z)\geq \sigma^{-1}(\sigma(\frac{v+w}{2})-\sigma (v))$. So $s_D$ has a positive lower bound and hence $D$ is a holomorphic homogeneous regular domain.
\end{proof}

As a consequence,  the  Carath\'eodory metrics, the Kobayashi metrics, and the Bergman metrics on domains that are constructed in Theorem \ref{thm:infitite connected domain} are complete, and they are equivalent. An explicit example can be constructed as follows: take $f\in Aut(\Delta)$
defined by
$$f(z)= \frac{z+1/2}{1+z/2}$$
and let $D=\Delta\setminus(\cup_{k=-\infty}^\infty f^k(\overline{\Delta}_{1/4}))$, then $D$ is a holomorphic homogeneous regular domain, and the  Carathodory metric, the Kobayashi metric, and the Bergman metric on $D$ are complete and equivalent. The special domain $D$ was constructed in \cite{Krantz} to show that a bounded planar domain may have infinite discrete automorphism group.

\section{Squeezing functions on annuli}\label{sec:annuli}
In the above section, we have studied some properties of squeezing functions of annuli. In this section, we want to investigate further properties of them.

We have seen that all annuli are holomorphic homogeneous regular domains, and their squeezing functions tend to 1 at the boundary.
An interesting but difficult problem is to give an exact expression of $s_{A_r}$. Another relatively simple problem is to determine the minimum of  $s_{A_r}$ for $r>0$, which are conformal invariants.

 By the conformal invariance of $s_{A_r}$, $s_{A_r}(z)$ depends only on $|z|$, so $s_{A_r}$ is reduced to a function defined on $(r,1)$. Be the reflection $z\mapsto r/z$, it can be further reduced to a function on $[\sqrt{r}, 1)$. We show that $s_{A_r}(x)$ is strictly increasing on $[\sqrt{r}, 1)$. To prove this result, we first prove two propositions, which are also interesting in their own right.

 Let $D\subset\mathbb{C}$ be a bounded domain with smooth boundary, denote by $H_D$  the set of univalent maps $f$ form $D$ to $\Delta$ such that $\Delta\setminus f(D)$ is a compact set. For bounded planar domain $D$, we always denote by $P_D(\cdot , \cdot)$ the Poincare distance of $D$.

\begin{prop}\label{prop:extremal implies surjective}
Let $D\subset\mathbb{C}$ be a bounded domain with smooth boundary. Then, for any $p\in D$, we have
$$s_D(p) = \sup\{r| \Delta_r\subset f(D)\ for
 some\ f\in H_D\ with \ f(p)=0 \}.$$
\end{prop}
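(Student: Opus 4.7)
The inequality $s_D(p) \ge \sup\{r \mid \Delta_r \subset f(D),\ f \in H_D,\ f(p) = 0\}$ follows at once from the definition of $s_D(p)$, since $H_D$ is a subset of the class of all holomorphic embeddings $D \to \Delta$. The content of the proposition is the reverse inequality, i.e., that every embedding admitted in the definition of $s_D(p)$ can, at the cost of an arbitrarily small loss in inner radius at $0$, be replaced by one in $H_D$.

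My plan is to combine the existence of extremal maps (Theorem \ref{thm:existence of ext}) with Koebe's uniformization theorem for finitely connected planar domains. Since $D$ has smooth boundary, its boundary is a finite disjoint union of smooth Jordan curves, so $D$ is finitely connected; consequently every univalent image $f(D) \subset \Delta$ is finitely connected as well. First I would invoke Theorem \ref{thm:existence of ext} to obtain an extremal $f^* : D \to \Delta$ with $f^*(p) = 0$ and $\Delta_{s_D(p)} \subset \Omega := f^*(D)$. Then I would apply Koebe's uniformization to $\Omega$ to produce a conformal equivalence $g : \Omega \to \tilde\Omega$, where $\tilde\Omega \subset \Delta$ is a circle domain --- that is, $\tilde\Omega = \Delta \setminus (\overline{B_1} \cup \cdots \cup \overline{B_{n-1}})$ --- with $\partial\Delta$ as its outer boundary. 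After post-composing with a M\"obius automorphism of $\Delta$, we arrange $g(0) = 0$. The composite $\tilde f := g \circ f^*$ is then a univalent map $D \to \tilde\Omega \subset \Delta$ sending $p$ to $0$, and $\Delta \setminus \tilde f(D)$ is a finite union of closed discs, hence compact; so $\tilde f \in H_D$.

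The key remaining step --- and where I expect the main technical difficulty to lie --- is to control the Euclidean inner radius of $\tilde\Omega = \tilde f(D)$ at the origin. A priori the Koebe map $g$ may distort near $0$, so one cannot directly conclude $\mathrm{dist}(0, \partial\tilde\Omega) \ge s_D(p) - \epsilon$. The strategy I would pursue is variational: I would argue that the extremal image $\Omega$ is in fact already of circle-domain type (so that $g$ is, up to rotation, the identity). Specifically, if the unbounded complementary component $K_0$ of $\bar{\mathbb{C}} \setminus \Omega$ had a spike intruding strictly into $\Delta$, one could fill this spike in to obtain a properly larger subdomain of $\Delta$ which, after a further Koebe normalization restoring the conformal class of $(D, p)$, would contain a disc of radius strictly exceeding $s_D(p)$ around $0$, contradicting the extremality of $f^*$. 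Making this variational step rigorous is the principal obstacle; an alternative approach would be to apply Koebe's theorem to a near-extremal sequence $f_k \in H_D$ and extract a limit using Montel's theorem and the generalized Hurwitz theorem (Theorem \ref{thm:generalized Hurwitz}), with continuity of inner radii under kernel convergence then yielding the desired estimate.
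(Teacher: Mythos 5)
Your first inequality is fine, and you have correctly located the crux: converting an embedding into one of class $H_D$ without losing inner radius at $0$. But the proposal stops exactly there. Of your two suggested resolutions, (a) — that the extremal image $\Omega=f^*(D)$ is, up to rotation, already a circle domain, to be proved by a ``fill in the spike and renormalize'' variation — is a rigidity statement substantially stronger than the proposition itself; it is established nowhere (the paper does not even know the extremal configuration for an annulus: that is only a conjecture in \S\ref{sec:annuli}), and making the variational step rigorous would require precisely the kind of distortion control you admit you cannot do. Resolution (b) misdiagnoses the problem: passing to limits of near-extremal sequences is handled by Montel and Theorem \ref{thm:generalized Hurwitz} exactly as in Theorem \ref{thm:existence of ext}, but a compactness argument cannot convert a given embedding whose image has boundary-touching complementary components into one whose complement in $\Delta$ is compact; the sequence you would extract a limit from consists of maps that are not in $H_D$ to begin with. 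So as written this is a plan with its central step missing, not a proof.

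The paper's proof needs neither the extremal map nor circle-domain uniformization, and it loses nothing (no $\epsilon$). Given \emph{any} univalent $f:D\rightarrow\Delta$ with $f(p)=0$ and $\Delta_r\subset f(D)$, let $D'$ be the union of $f(D)$ with the compact connected components of $\Delta\setminus f(D)$; then $D'$ is simply connected, and one takes a Riemann map $g:D'\rightarrow\Delta$ with $g(0)=0$. The distortion problem you worried about is circumvented by the hyperbolic metric: since $D'\subset\Delta$, the decreasing property gives $P_\Delta\leq P_{D'}$ on $D'$, so the $P_{D'}$-ball $\{z\in D':\,P_{D'}(z,0)<\sigma(r)\}$ is contained in $\{z:\,P_\Delta(z,0)<\sigma(r)\}=\Delta_r\subset f(D)$; on the other hand $g$ is an isometry from $(D',P_{D'})$ onto $(\Delta,P_\Delta)$ fixing $0$, hence it maps that ball \emph{onto} $\Delta_r$. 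Therefore $\Delta_r\subset g(f(D))$ with the same $r$, while $\Delta\setminus g(f(D))=g\bigl(D'\setminus f(D)\bigr)$ is compact, so $g\circ f\in H_D$. This monotonicity-of-the-Poincar\'e-metric trick is what should replace your unproven rigidity step; with it, your Koebe circle-domain machinery (and the question of whether extremal images are circle domains) becomes unnecessary.
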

\begin{proof}
Let $f: D\rightarrow \Delta$ be a univalent map with $f(p)=0$ and assume $\Delta_r\subset f(D)$. Let $D'$ be the union of $f(D)$ and the compact connected components of $\Delta\setminus f(D)$. Then $D'$ is simply connected. By the Riemann mapping theorem, there is a conformal map $g: D'\rightarrow \Delta$ with $g(0)=0$.

Note that
$$\Delta_r=\{z\in \Delta| P_\Delta(z , 0)< \sigma(r) \},$$
by the decreasing property of the Poincare metrics on planar domains, we have $P_\Delta(z,w)\leq P_{D'}(z,w)$ for all $z , w \in D'$, so
$$\{z\in D'| P_{D'}(z , 0)< \sigma(r) \}\subset\Delta_r.$$
Note that $g$ is an isometry form $(D',P_{D'})$ to $(\Delta, P_\Delta)$. Hence
$$\Delta_r=\{z\in \Delta| P_\Delta(z , 0)< \sigma(r) \}= g(\{z\in D'| P_{D'}(z , 0)<\sigma(r)\})\subset g(f(D)).$$
Now we get a univalent map $g\circ f: D\rightarrow \Delta$ with $g\circ f\in H_D$, $g\circ f(p)=0$ and $\Delta_r\subset g\circ f(D)$. Since $f$ is arbitrary, we see that
$$s_D(p) = \sup\{r| \Delta_r\subset h(D)\ for\
 some\ h\in H_D\ with \ h(p)=0 \}.$$
\end{proof}

For two subsets $A$ and $B$ of $\Delta$, we denote by $P_\Delta(A , B)$ the distance between $A$ and $B$ with respect to the Poincare distance of $\Delta$.

\begin{prop}\label{prop:in form of Poincare metric}
Let $D\subset\mathbb{C}$ be a bounded domain with smooth boundary. For $p\in D$. Let
$$u(p) = \sup\{P_\Delta(f(p), \Delta\setminus f(D))| f\in H_D\} ,$$
then we have $s_D(p) = \sigma^{-1}(u(p))$.
\end{prop}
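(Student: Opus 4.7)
The plan is to reduce the statement to Proposition~\ref{prop:in form of Poincare metric}'s predecessor and then translate between Euclidean radii and Poincaré distances using two simple facts: Möbius transformations of $\Delta$ are Poincaré isometries, and $\Delta_r$ is a Poincaré ball centered at $0$.

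First I would show that in the definition of $u(p)$ the supremum does not change if we restrict to those $f \in H_D$ with $f(p) = 0$. Given any $f \in H_D$, choose a Möbius automorphism $\varphi$ of $\Delta$ with $\varphi(f(p)) = 0$; then $\tilde f := \varphi \circ f$ lies in $H_D$ (since $\varphi$ maps the compact set $\Delta \setminus f(D)$ to the compact set $\Delta \setminus \tilde f(D)$), satisfies $\tilde f(p) = 0$, and, because $\varphi$ is a Poincaré isometry,
$$P_\Delta(\tilde f(p),\,\Delta\setminus \tilde f(D)) = P_\Delta(f(p),\,\Delta\setminus f(D)).$$
So the supremum defining $u(p)$ is attained over the smaller family $\{f \in H_D : f(p) = 0\}$.

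Next I would show that for each $f$ in this smaller family,
$$\sup\{r : \Delta_r \subset f(D)\} = \sigma^{-1}\bigl(P_\Delta(0,\,\Delta\setminus f(D))\bigr).$$
Indeed, since $f \in H_D$, the set $K_f := \Delta \setminus f(D)$ is compact in $\Delta$, so the infimum defining $P_\Delta(0, K_f)$ is attained at some $w_0 \in K_f$ of minimal Euclidean modulus $r_0 := \min_{w \in K_f}|w|$; by monotonicity of $\sigma$, $P_\Delta(0, K_f) = \sigma(r_0)$. On the other hand, $\Delta_r \subset f(D)$ is equivalent to $\Delta_r \cap K_f = \emptyset$, which holds iff $r \le r_0$, so $\sup\{r : \Delta_r \subset f(D)\} = r_0 = \sigma^{-1}(P_\Delta(0, K_f))$.

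Finally I would combine these: applying $\sigma^{-1}$ (which is continuous and strictly increasing) to $u(p)$ and invoking Proposition~\ref{prop:extremal implies surjective},
$$\sigma^{-1}(u(p)) = \sup_{\substack{f \in H_D\\ f(p)=0}} \sigma^{-1}\bigl(P_\Delta(0,\Delta\setminus f(D))\bigr) = \sup\{r : \Delta_r \subset f(D)\text{ for some }f\in H_D,\ f(p)=0\} = s_D(p).$$
There is no real obstacle here; the only subtle point is the normalization step, making sure that post-composition with a Möbius transformation keeps $f$ in $H_D$ (which uses that $H_D$ is defined by the topological condition that $\Delta \setminus f(D)$ be compact, a condition preserved under automorphisms of $\Delta$).
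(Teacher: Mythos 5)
Your proof is correct and is essentially the same as the paper's: both rest on Proposition~\ref{prop:extremal implies surjective}, the fact that automorphisms of $\Delta$ are Poincar\'e isometries, and the dictionary $P_\Delta(0,z)=\sigma(|z|)$ between Euclidean radii of discs centered at $0$ and Poincar\'e distances. The only difference is organizational -- you package everything as a single chain of equalities via the normalization $f(p)=0$, whereas the paper proves the two inequalities $s_D(p)\geq \sigma^{-1}(u(p))$ and $s_D(p)\leq \sigma^{-1}(u(p))$ separately (the first directly from the definition of $s_D$, the second via Proposition~\ref{prop:extremal implies surjective}).
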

\begin{proof}
For $p\in D$ and $f\in H_D$, let
$$u=P_\Delta(f(p), \Delta\setminus f(D)).$$
Then the $P_\Delta$-disc with center $f(p)$ and radius $u$ is contained in $f(D)$. Take a conformal map $g:\Delta\rightarrow \Delta$ such that $g(f(p))=0$, then we have $\Delta_{\sigma^{-1}(u)}\subset g\circ f(D)$. So we have $s_D(p)\geq \sigma^{-1}(u)$. Since $f\in H_D$ is arbitrary, we have
$s_D(p)\geq \sigma^{-1}(u(p))$.

On the other hand, for an arbitrary $f\in H_D$ with $f(p)=0$, let $r$ be the positive number such that $\Delta_r\in f(D)$ but $\Delta_{r+\epsilon}\nsubseteq f(D)$ for any $\epsilon >0$. It is clear that $$r=\sigma^{-1}\left(P_\Delta(f(p), \Delta\setminus f(D))\right)\leq \sigma^{-1}(u(p)).$$ Since $f\in H_D$ is arbitrary, by the above proposition, we have $s_D(p)\leq \sigma^{-1}(u(p))$.
\end{proof}

\begin{thm}\label{thm:incre. of squ. f. annuli}
Viewed as a function on $[\sqrt{r}, 1)$, the squeezing function $s_{A_r}(z)$ of $A_r$ is strictly increasing on $[\sqrt{r}, 1)$; in particular, it attains its minimum at $\sqrt{r}$.
\end{thm}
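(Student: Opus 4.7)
The plan is to reduce the statement, via Proposition~\ref{prop:in form of Poincare metric}, to the strict monotonicity of
$$u(\rho)\ :=\ \sup_{f\in H_{A_r}} P_\Delta\bigl(f(\rho),K_f\bigr),\qquad K_f:=\Delta\setminus f(A_r),$$
as a function on $[\sqrt r,1)$. First, since $R(z)=r/z$ lies in $\mathrm{Aut}(A_r)$, biholomorphic invariance gives $s_{A_r}(\rho)=s_{A_r}(r/\rho)$, and together with the boundary behavior $s_{A_r}\to 1$ at $\partial A_r$ (Theorem~\ref{thm:squeezing function of annuli}) and continuity (Theorem~\ref{thm:continuity of squeezing functions}), the infimum is attained and $\sqrt r$ is at least one minimizer. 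Since $\sigma^{-1}$ is strictly increasing, it suffices to prove that $u$ is strictly increasing on $[\sqrt r,1)$.

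Fix $\sqrt r\le\rho_1<\rho_2<1$. I would first establish the weak inequality $u(\rho_1)\le u(\rho_2)$ as follows. Given $\epsilon>0$, choose $f\in H_{A_r}$ with $P_\Delta(f(\rho_1),K_f)>u(\rho_1)-\epsilon$; after post-composing with an automorphism of $\Delta$ (which preserves Poincaré distances and keeps $f$ in $H_{A_r}$), assume $f(\rho_1)=0$. Then let $\phi\in\mathrm{Aut}(\Delta)$ send $f(\rho_2)$ to $0$ and put $g=\phi\circ f$; then $g\in H_{A_r}$, $g(\rho_2)=0$, $K_g=\phi(K_f)$, and $P_\Delta(0,K_g)=P_\Delta(f(\rho_2),K_f)$. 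The weak inequality will follow once one knows that $P_\Delta(f(\rho_2),K_f)\ge P_\Delta(0,K_f)-o(1)$ as $\epsilon\to 0$; this is the point where the specific geometry of $A_r$ enters: for $\rho_1\ge\sqrt r$ the point $\rho_2$ is hyperbolically farther from the inner boundary circle $\{|z|=r\}$ than $\rho_1$ is, while for any $f\in H_{A_r}$ the image $K_f$ is the ``inner'' boundary component of $f(A_r)$, so moving from $\rho_1$ to $\rho_2$ can only push one farther from $K_f$. The identity embedding $f=\mathrm{id}\in H_{A_r}$, for which $P_\Delta(\rho_i,\overline{\Delta}_r)=\sigma(\rho_i)-\sigma(r)$, gives the baseline strict inequality.

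For the strict inequality $u(\rho_1)<u(\rho_2)$, I would argue by contradiction. Assume $u(\rho_1)=u(\rho_2)=M$. Take a sequence $f_k\in H_{A_r}$ with $f_k(\rho_1)=0$ and $P_\Delta(0,K_{f_k})\to M$. By Montel's theorem and the generalized Hurwitz theorem (Theorem~\ref{thm:generalized Hurwitz}) applied as in the proof of Theorem~\ref{thm:existence of ext}, a subsequence converges locally uniformly to an extremal univalent $f_\infty\in H_{A_r}$ (using Proposition~\ref{prop:extremal implies surjective}) with $f_\infty(\rho_1)=0$ and $P_\Delta(0,K_{f_\infty})=M$. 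The equality $u(\rho_1)=u(\rho_2)$ combined with the argument of the preceding paragraph (choosing $\phi$ that re-centers at $f_\infty(\rho_2)$) forces $P_\Delta(f_\infty(\rho_2),K_{f_\infty})=M$ as well. Thus the single compact set $K_{f_\infty}\subset\Delta$ would sit at the same Poincaré distance from two distinct interior points $0$ and $f_\infty(\rho_2)$, while the closed hyperbolic discs of radius $M$ around each of these points meet $K_{f_\infty}$. Combined with the fact that the connected complement $K_{f_\infty}$ is the image of the inner boundary component, this rigid configuration should yield a contradiction, e.g.\ by producing a small perturbation of $f_\infty$ (obtained by composing with a suitable Möbius transformation that fixes $f_\infty(\rho_2)$ but slightly retracts $K_{f_\infty}$ away from $f_\infty(\rho_2)$) which strictly increases $P_\Delta(\cdot,K)$ at $\rho_2$.

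The main obstacle is precisely this last step: converting the monotonicity $u(\rho_1)\le u(\rho_2)$ to strict inequality. Geometrically it is plausible because an extremal configuration for one point of $A_r$ is highly constrained — the disc $\overline{\Delta}_{s_{A_r}(\rho_1)}$ must be tangent to $K_{f_\infty}$ — and this rigidity is incompatible with being simultaneously extremal at a hyperbolically distinct point in $A_r$. The technical core of the argument therefore lies in making this rigidity quantitative, most cleanly by producing the explicit perturbation of the extremal map described above and verifying that it lies in $H_{A_r}$ and strictly improves the Poincaré distance at $\rho_2$.
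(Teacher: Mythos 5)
Your opening reduction (Propositions~\ref{prop:extremal implies surjective} and \ref{prop:in form of Poincare metric}, plus the reflection symmetry $s_{A_r}(\rho)=s_{A_r}(r/\rho)$) coincides with the paper's starting point, but both halves of your main argument have genuine gaps, and at exactly those places the paper uses ideas that are absent from your proposal. For the weak inequality you rely on the pointwise claim that for any $f\in H_{A_r}$ ``moving from $\rho_1$ to $\rho_2$ can only push one farther from $K_f$,'' i.e.\ $P_\Delta(f(\rho_2),K_f)\geq P_\Delta(f(\rho_1),K_f)$. This is false in general: since $z\mapsto f(e^{i\theta}z)$ also lies in $H_{A_r}$ with the same complement $K_f$, your claim would force the image of the circle $|z|=\rho_2$ to be farther from $K_f$ than the image of $|z|=\rho_1$ at \emph{every} corresponding angle, which fails when $K_f$ is an irregular continuum (e.g.\ one with a roomy chamber reached through a narrow channel, where the conformal radial flow must pass close to $K_f$ on its way out). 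Even your subsidiary assertion that ``for any $f\in H_{A_r}$ the image $K_f$ is the inner boundary component'' is not automatic: precomposing with $z\mapsto r/z$ produces maps in $H_{A_r}$ for which $K_f$ corresponds to $\{|z|=1\}$. The paper needs extremality to rule out this flipped orientation (that is precisely the step ``if not, composing $f$ with the reflection $z\mapsto r/z$ contradicts extremality''), and, crucially, it never makes a pointwise claim along radii: it uses the rotation invariance of $s_{A_r}$ together with Proposition~\ref{prop:in form of Poincare metric} to get the identity $\sup\{P_\Delta(f(z),K_f):|z|=x\}=P_\Delta(f(x),K_f)$ for the \emph{extremal} $f$ at $x$, and then compares suprema over whole image circles using the topological fact that $f(|z|=x')$ lies in the unbounded component of $\mathbb{C}\setminus f(|z|=x)$. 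Your argument never invokes rotation invariance, which is the key mechanism.

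The strictness step, which you yourself flag as the main obstacle, is where the proposal stops being a proof, and the repair you sketch cannot work as described: a M\"obius transformation of $\Delta$ is a hyperbolic isometry, so if it fixes $f_\infty(\rho_2)$ it preserves all Poincar\'e distances from that point; it therefore cannot ``slightly retract $K_{f_\infty}$ away from $f_\infty(\rho_2)$.'' The paper needs no rigidity or perturbation analysis at all: once the orientation and the sup-over-circle identity are in place, strict inequality follows from the separation argument, because the inner image circle (on which the supremum equals $\sigma(s(x))$ and is attained at $f(x)$) separates every point of the outer image circle from $K_f$. A smaller logical slip: at the outset you claim that symmetry, continuity and the boundary behavior already show $\sqrt r$ is a minimizer; symmetry alone does not place the minimum at the symmetric point (a symmetric function can have a local maximum there), and in both your plan and the paper this is a consequence of the monotonicity, not an input to it.
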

\begin{proof}
 For simplicity, let $s=s_{A_r}$. For  $x\in [\sqrt{r}, 1)$, by Theorem\ref{thm:existence of ext}, there is a univalent map $f: D\rightarrow \Delta$ such that $f(x)=0$ and $\Delta_{s(x)}\subset f(D)$. By Proposition\ref{prop:extremal implies surjective}, we may assume $f\in H_D$. By proposition\ref{prop:in form of Poincare metric}, we have $s(x)= \sigma^{-1}(P_\Delta(f(x), \Delta\setminus f(D)))$. By proposition\ref{prop:in form of Poincare metric} and the conformal invariance of $s$, we have the identity
 $$P_\Delta(f(x), \Delta\setminus f(D))=sup\{P_\Delta(f(z), \Delta\setminus f(D))|z\in D, |z|=x\}.$$

 Note that the curve $f(|z|=\sqrt{r})$ separates $\mathbb{C}$ into two connected parts, let $U$ and $V$ be the bounded and unbounded connected components of $\mathbb{C}\setminus f(|z|=\sqrt{r})$ respectively.

 If $x>\sqrt{r}$, then $f(|z|=x)\subset V$. In fact, if it is not the case, then composing $f$ with the reflection $z\mapsto r/z$ will lead to a contradiction to the extremal property assumption on $f$.

 Now let $x'\in [\sqrt{r}, 1)$ with $x'>x$, then it is clear that $f(|z|=x')$ lies in the unbounded component of $\mathbb{C}\setminus f(|z|=x)$,
 so we have
 $$sup\{P_\Delta(f(z), \Delta\setminus f(D))|z\in D, |z|=x'\}>sup\{P_\Delta(f(z), \Delta\setminus f(D))|z\in D, |z|=x\}$$
 by proposition\ref{prop:in form of Poincare metric}, there is a point $z\in A_r$ with $|z|=x'$ and $s_{A_r}(z)>s(x)$. Note that $s_{A_r}(z)=s(|z|)=s(x')$, hence $s(x')>s(x)$.
 \end{proof}

 Theorem \ref{thm:incre. of squ. f. annuli} and its proof lead us to conjecture that, for $\rho\in [\sqrt{r}, 1)$, $s_{A_r}(\rho)$ is given by
$$s_{A_r}(\rho)= \sigma^{-1}(\sigma(\rho)-\sigma(r))=\sigma^{-1} \left(\log\frac{(1+\rho)(1-r)}{(1-\rho)(1+r)}\right),$$
where the function $\sigma$ is defined as in the proof of Theorem \ref{thm:squeezing function of annuli}. Provided  this conjecture,  Theorem \ref{thm:incre. of squ. f. annuli} implies that  $s_{A_r}(\rho)$ attains its minimum             $$s_{A_r}(\sqrt{r})=\tanh\log\frac{1+\sqrt{r}}{\sqrt{1+r}}$$
at $\rho = \sqrt{r}$, which characterizes the conformal structure of $A_r$.

\section{Explicit form of squeezing functions on some special domains}\label{sec:explicit form of sf}

In this section, we give the explicit form of squeezing functions on some special domains, namely punctured balls and classical bounded symmetric domains.

We first consider domains constructed by deleting analytic subsets from other domains.
\begin{thm}\label{thm:punctured bounded domain}
Let $D'\subset\mathbb{C}^n$ be a bounded domain and $A\subset D'$ be a proper analytic subset. Then, for the domain $D=D'\setminus A$, one has
$$s_D(z)\leq \sigma^{-1}(K_{D'}(z ,A)),\ \ z\in D$$
where $\sigma$ is defined as in the proof of Theorem \ref{thm:squeezing function of annuli} and $K_{D'}(\cdot , \cdot)$ is the Kobayashi distance on $D'$; in particular, we have $\lim_{z\rightarrow z_0}s_D(z)=0$.
\end{thm}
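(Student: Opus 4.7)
The plan is to follow closely the template of Theorem \ref{thm:fin. connected with point}(2), but upgrade each ingredient to several variables. First, apply Theorem \ref{thm:existence of ext} to get an extremal embedding $f: D \to B^n$ with $f(z)=0$ and $B^n(0, s_D(z))\subset f(D)$. Since $f$ is bounded and $A$ is a proper analytic subset of $D'$, Riemann's removable singularity theorem in several complex variables extends $f$ to a holomorphic map $\tilde{f}: D' \to \overline{B^n}$. Because $\|\tilde{f}\|^2$ is plurisubharmonic and attains a value strictly less than $1$ at $z$, the maximum principle forces $\tilde{f}(D') \subset B^n$.

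The crucial step is to show $\tilde{f}(A) \cap f(D) = \emptyset$. Suppose to the contrary that $\tilde{f}(a) \in f(D)$ for some $a \in A$, and set $W = \tilde{f}^{-1}(f(D))$, an open neighborhood of $a$ in $D'$. Since $f$ is a biholomorphism onto $f(D)$, the composition $g := f^{-1}\circ\tilde{f}: W \to D$ is well-defined and holomorphic. On $D$ (which is contained in $W$) the map $g$ equals the identity, and $D$ accumulates at $a$ since $A$ has positive codimension; by the identity theorem, $g$ is the identity on the connected component of $W$ containing $a$. Hence $a = g(a) \in D$, contradicting $a \in A$. This means $\tilde{f}(a) \notin B^n(0, s_D(z))$, i.e., $\|\tilde{f}(a)\| \geq s_D(z)$ for every $a \in A$.

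Now apply the decreasing property of Kobayashi distances to $\tilde{f}: D' \to B^n$ together with the identity $K_{B^n}(0, w) = \sigma(\|w\|)$ recalled in \S\ref{sec:continuity}:
\[
\sigma(s_D(z)) \;\leq\; \sigma(\|\tilde{f}(a)\|) \;=\; K_{B^n}(0,\tilde{f}(a)) \;\leq\; K_{D'}(z,a).
\]
Taking the infimum over $a\in A$ and using the monotonicity of $\sigma^{-1}$ gives $s_D(z) \leq \sigma^{-1}(K_{D'}(z, A))$. For the final assertion, as $z\to z_0 \in A$ one has $K_{D'}(z,A)\leq K_{D'}(z,z_0)\to 0$ by continuity of the Kobayashi distance on $D'$, whence $s_D(z)\to 0$.

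The main obstacle is the disjointness step $\tilde{f}(A)\cap f(D)=\emptyset$: in the planar single-point version this was asserted without comment, but for higher-dimensional analytic subsets one needs both the several-variable removable singularity theorem (to produce $\tilde{f}$) and the identity theorem argument above (to rule out $\tilde{f}(a)\in f(D)$). A minor subsidiary point is verifying that $\tilde f$ avoids $\partial B^n$ before invoking the contracting property of Kobayashi distance, which is handled via the plurisubharmonic maximum principle applied to $\|\tilde f\|^2$.
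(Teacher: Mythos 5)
Your proof is correct and follows essentially the same route as the paper's: extend the embedding across $A$ by the removable singularity theorem, note that the extension maps $A$ off the image of $D$, and apply the decreasing property of the Kobayashi distance. The only differences are cosmetic --- the paper works with an arbitrary embedding $f_z$ (implicitly taking the supremum at the end) rather than the extremal one from Theorem \ref{thm:existence of ext}, and it asserts the disjointness $\tilde{f}(A)\cap f(D)=\emptyset$ as ``clear'' where you supply the identity-theorem argument and the plurisubharmonic maximum principle step.
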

\begin{proof}
Let $z\in D$, and $f_z: D\rightarrow B^n$ be a holomorphic open embedding such that $f_z(z)=0$. By Riemann's removable singularity theorem, $f_z$ can be extended to a holomorphic map $\tilde{f_z}$ form $D'$ to $B^n$. It is clear that $\tilde{f_z}(D)\cap \tilde{f_z}(A)=\emptyset$. By the decreasing property of Kobayashi distance, we have $K_{B^n}(0 , \tilde{f_z}(A))\leq K_{D'}(z ,A)$, hence $s_D(z)\leq \sigma^{-1}(K_{D'}(z ,A)),\ \ z\in D$.
\end{proof}

\begin{rem}
The domains $D$ constructed in the above theorem are not holomorphic homogeneous regular domains.
The conclusion  can also be derived from Corollary \ref{cor:holomorphic homogeneous regular domain implies stein} since $D$ is not pseudoconvex, or from Theorem \ref{thm:Caro. metic on holomorphic homogeneous regular domain complete} since the Carath\'{e}odory metric on $D$, which is just the restriction of that of $D'$, is not complete.
\end{rem}

For the special case of punctured balls, Theorem \ref{thm:punctured bounded domain} implies the following
\begin{cor}\label{cor:punctured balls}
The squeezing function $s_{B^n\setminus \{0\}}$ on the $n$ dimensional punctured ball $B^n\setminus \{0\}$ is given by
$$s_{B^n\setminus \{0\}}(z)=\parallel z\parallel$$
where $\parallel z\parallel$ is the Euclidean norm of $z$.
\end{cor}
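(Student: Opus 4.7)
The plan is to prove the two inequalities $s_{B^n\setminus\{0\}}(z) \leq \|z\|$ and $s_{B^n\setminus\{0\}}(z) \geq \|z\|$ separately. The upper bound will be essentially a direct quote of Theorem \ref{thm:punctured bounded domain} once we identify the relevant Kobayashi distance, while the lower bound will come from writing down an explicit extremal map using a Möbius automorphism of $B^n$.

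For the upper bound, I would apply Theorem \ref{thm:punctured bounded domain} with $D' = B^n$ and $A = \{0\}$, which gives $s_{B^n\setminus\{0\}}(z) \leq \sigma^{-1}(K_{B^n}(z, 0))$. Since the Kobayashi distance on $B^n$ from $0$ to $z$ is $\sigma(\|z\|)$ (a fact already recalled after Theorem \ref{thm:continuity of squeezing functions}), the right-hand side is simply $\|z\|$.

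For the lower bound, fix $z \in B^n\setminus\{0\}$ and choose an automorphism $\varphi_z \in \mathrm{Aut}(B^n)$ with $\varphi_z(z) = 0$ (for instance, the standard involution centered at $z$). Restricting $\varphi_z$ to $D := B^n\setminus\{0\}$ yields a holomorphic embedding $f := \varphi_z|_D \colon D \to B^n$ with $f(z) = 0$ and $f(D) = B^n \setminus \{\varphi_z(0)\}$. Since $\varphi_z$ is a Kobayashi isometry of $B^n$, we have $K_{B^n}(0, \varphi_z(0)) = K_{B^n}(\varphi_z(z), \varphi_z(0)) = K_{B^n}(z, 0) = \sigma(\|z\|)$, so $\|\varphi_z(0)\| = \|z\|$. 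Hence $B^n(0, \|z\|) \subset B^n \setminus \{\varphi_z(0)\} = f(D)$, which gives $s_D(z) \geq \|z\|$.

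There is no real obstacle here: the argument is essentially a bookkeeping check that the natural candidate extremal map from the automorphism group of $B^n$ realizes exactly the upper bound provided by Theorem \ref{thm:punctured bounded domain}, the only subtlety being the verification that $\|\varphi_z(0)\| = \|z\|$ via the Kobayashi isometry property rather than a direct computation with the explicit Möbius formula.
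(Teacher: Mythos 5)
Your proof is correct and follows the same route as the paper: the upper bound is exactly the paper's application of Theorem \ref{thm:punctured bounded domain} with $D'=B^n$, $A=\{0\}$, and the lower bound, which the paper dismisses as ``clear,'' is precisely the automorphism argument you spell out (indeed for the standard involution $\varphi_z$ one even has $\varphi_z(0)=z$ directly). No gaps.
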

\begin{proof}
By theorem\ref{thm:punctured bounded domain}, we have $s_{B^n\setminus \{0\}}(z)\leq \parallel z\parallel$. On the other hand,
it is clear that $s_{B^n\setminus \{0\}}(z)\geq \parallel z\parallel$. Hence $s_{B^n\setminus \{0\}}(z)=\parallel z\parallel$.
\end{proof}

Other examples of bounded domains whose squeezing functions can be given explicitly are classical symmetric bounded domains. Recall that a classical symmetric domain is a domain of one of the following four types:
\begin{equation*}
\begin{split}
& D_I(r,s)=\{Z=(z_{jk}): I-Z\bar{Z'}>0,\ \text{where}\ Z\ \text{is\ an}\ r\times s \ \text{matrix} \}\ (r\leq s),\\
& D_{II}(p)=\{Z=(z_{jk}): I-Z\bar{Z'}>0,\ \text{where}\ Z\ \text{is\ a\ symmetric\ matrix\ of\ order\ }p\},\\
&D_{III}(q)=\{Z=(z_{jk}): I-Z\bar{Z'}>0,\ \text{where}\ Z\ \text{is\ a\ skew-symmetric\ matrix\ of\ order\ }q\},\\
&D_{IV}(n)=\{z=(z_1, \cdots , z_n)\in\mathbb{C}^n: 1+|zz'|^2-2zz'>0,\ 1-|zz'|>0\}.
\end{split}
\end{equation*}
Here $I$ is the identity matrix of proper order, $\bar{Z}$ denotes the conjugate matrix of $Z$ and $Z'$ the transposed matrix of $Z$. The complex dimensions of these four domains are $rs,\ p(p+1)/2,\ q(q-1)/2$ and $n$ respectively.

For a bounded homogeneous domain $D$, by the holomorphic invariance of squeezing functions, $s_D$ is a constant function on $D$, and we  denote this constant by $s(D).$  By a theorem of Kubota, which is based on an earlier work of Alexander \cite{Alexander}, the squeezing functions on the above four types of domains can be given explicitly as follows:
\begin{thm}(see Theorem 1 in \cite{Kubota})
\begin{equation*}
\begin{split}
&s(D_I(r,s)) = r^{-1/2},\\
&s(D_{II}(p))=p^{-1/2},\\
&s(D_{III}(q))=[q/2]^{-1/2},\\
&s(D_{IV}(n))= 2^{-1/2},
\end{split}
\end{equation*}
where $[q/2]$ denotes the integral part of $q/2$.
\end{thm}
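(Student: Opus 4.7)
The plan is to exploit homogeneity to reduce the problem to a single computation at the origin, and then to sandwich $s(D)$ between matching lower and upper bounds. Since each of the four classical domains is a bounded homogeneous domain, the squeezing function is constant, as noted; it therefore suffices to compute $s_D(0)$, where $0$ denotes the origin in the ambient matrix space that contains the Harish-Chandra realization of $D$.

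For the lower bound I would write down, for each type, an explicit linear holomorphic embedding $f:D\rightarrow B^N$ (with $N$ the complex dimension of $D$) sending $0$ to $0$ and exhibit a ball $B^N(0,r)$ inside $f(D)$. The key ingredient is the comparison between the Frobenius (Hilbert-Schmidt) norm $\|Z\|_F$ on the ambient matrix space, which gives the Euclidean norm on $\mathbb{C}^N$, and the operator norm $\|Z\|_{op}$, which defines $D$ (since $I-Z\bar{Z}'>0$ is equivalent to $\|Z\|_{op}<1$). For $D_I(r,s)$ one has $\|Z\|_{op}\le\|Z\|_F\le\sqrt{r}\,\|Z\|_{op}$, so scaling by $1/\sqrt{r}$ gives an embedding into $B^{rs}$ whose image contains $B^{rs}(0,r^{-1/2})$; completely analogous spectral computations (counting the effective number of independent singular values under the symmetry or skew-symmetry constraint, and using the quadratic defining inequalities in the type IV case) yield the remaining bounds $p^{-1/2}$, $[q/2]^{-1/2}$ and $2^{-1/2}$.

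For the matching upper bound I would invoke the rigidity result of Alexander \cite{Alexander} in the form used by Kubota \cite{Kubota}: among all holomorphic embeddings $f:D\rightarrow B^N$ with $f(0)=0$, the largest ball $B^N(0,r)$ that can be contained in $f(D)$ is realized by the natural linear embedding above, and no biholomorphic modification can improve it. This uses the specific structure of the isotropy representation of $\operatorname{Aut}(D)$ at $0$ together with the Schwarz-type extremal property of the Harish-Chandra model. Combined with Theorem \ref{thm:existence of ext}, which guarantees that some extremal $f$ exists, this pins down the supremum exactly and yields the stated values.

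The main obstacle is the upper bound: the lower bound is a clean exercise in eigenvalue comparison, but showing that no fancier nonlinear embedding into $B^N$ can squeeze in a strictly larger ball requires the full strength of the Kubota-Alexander rigidity theorems for the four series of classical symmetric domains. I would therefore cite \cite{Kubota} and \cite{Alexander} rather than reprove that rigidity here, and restrict my own contribution to verifying the lower bound by the linear model and then checking, type by type, that the numerical values match.
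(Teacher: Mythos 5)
Your proposal is correct and is essentially the paper's approach: the paper states this result purely as a quotation of Theorem 1 in \cite{Kubota} (resting on \cite{Alexander}) and offers no independent proof, so deferring the sharp upper bound to the Kubota--Alexander rigidity theorems is exactly what the paper does. Your additional verification of the lower bound via the linear embedding $Z\mapsto Z/\sqrt{r}$ and the operator-norm/Frobenius-norm comparison matches the discussion in the remark at the end of \S\ref{sec:explicit form of sf}, where the same linear map is identified as the extremal embedding with $B^{n}\subset D_I(r,s)\subset\sqrt{r}B^{n}$.
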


For products of classical symmetric domains, we have:
\begin{thm}(see Theorem 2 in \cite{Kubota})
If $D_1, \cdots , D_m$ are classical symmetric domains, then
$$s(D_1\times\cdots\times D_m) = (s(D_1)^{-2}+\cdots + s(D_m)^{-2})^{-1/2}.$$
\end{thm}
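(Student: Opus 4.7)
The equality splits into two inequalities. The lower bound $s(D) \geq \bigl(\sum_i s(D_i)^{-2}\bigr)^{-1/2}$ is obtained by an explicit construction using anisotropically scaled optimal embeddings of each factor. The reverse inequality is the delicate part, relying on the special structure of classical bounded symmetric domains in their Harish-Chandra realization. Since $D$ is homogeneous, it suffices to work at a single base point $p = (p_1, \ldots, p_m)$, where each $p_i$ is the origin of the Harish-Chandra realization of $D_i \subset \mathbb{C}^{n_i}$ (so that $D_i$ is a bounded convex circled domain centered at $0$).

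\textbf{Lower bound.} Set $s_i := s(D_i)$, $n_i := \dim_{\mathbb{C}} D_i$, and $n := \sum_i n_i$. By Theorem \ref{thm:existence of ext} applied at $p_i$, there exists a holomorphic embedding $f_i : D_i \to B^{n_i}$ with $f_i(p_i) = 0$ and $B^{n_i}(0, s_i) \subset f_i(D_i)$. For positive constants $c_1, \ldots, c_m$ satisfying $\sum_i c_i^2 = 1$, define
$$F(x_1, \ldots, x_m) := \bigl(c_1 f_1(x_1), \ldots, c_m f_m(x_m)\bigr).$$
Then $\|F(x)\|^2 = \sum_i c_i^2 \|f_i(x_i)\|^2 < 1$, so $F$ is a holomorphic embedding of $D$ into $B^n$ sending $p$ to $0$, and its image contains $\prod_i B^{n_i}(0, c_i s_i)$ and hence $B^n(0, \min_i c_i s_i)$. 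Maximizing $\min_i c_i s_i$ subject to $\sum_i c_i^2 = 1$ forces the equalization $c_i s_i = t$ for all $i$, i.e.\ $c_i = t/s_i$ with $t = \bigl(\sum_i s_i^{-2}\bigr)^{-1/2}$. This yields $s(D) \geq t = \bigl(\sum_i s(D_i)^{-2}\bigr)^{-1/2}$.

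\textbf{Upper bound and main obstacle.} For the reverse inequality, we use that in the Harish-Chandra realization each $D_i$ is a bounded convex circled domain with center $0$, whose Carath\'eodory norm at $0$ coincides with the Minkowski functional $\|\cdot\|_{D_i}$; moreover $s_i = \inf_{v \neq 0} \|v\|/\|v\|_{D_i}$, with extremal directions where this infimum is attained. On the product, $c_D(0;\, v) = \max_i \|v_i\|_{D_i}$. Given any holomorphic embedding $F : D \to B^n$ with $F(0) = 0$ and $B^n(0, r) \subset F(D)$, the decreasing property of the Carath\'eodory norm applied both to $F$ and to its local inverse $F^{-1} : B^n(0, r) \to D$ yields
$$r \max_i \|v_i\|_{D_i} \;\leq\; \|dF(0)\, v\| \;\leq\; \max_i \|v_i\|_{D_i}$$
for all $v = (v_1, \ldots, v_m) \in T_0 D$. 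The principal difficulty is to extract from these maximum-type bounds the additive conclusion $r^{-2} \geq \sum_i s_i^{-2}$: the Carath\'eodory norm on the product behaves as a maximum over factors, while the target ball $B^n$ imposes the quadratic Euclidean constraint $\|v\|^2 = \sum_i \|v_i\|^2$. Bridging this asymmetry requires exploiting the extremal directions of each $\|\cdot\|_{D_i}$ (where $\|v_i\| = s_i \|v_i\|_{D_i}$) combined with a careful Lagrangian-type optimization; an integration over phases $v_i \mapsto e^{i\theta_i} v_i$, possible thanks to the circularity of each $D_i$, yields the key intermediate identity $\sum_i \|A_i v_i\|^2 = \int \|dF(0)(e^{i\theta_1} v_1, \ldots, e^{i\theta_m} v_m)\|^2 d\theta$ (where $A_i$ is the block of $dF(0)$ corresponding to the $i$-th factor), from which the sharp bound should follow after choosing each $v_i$ extremal. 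Kubota's original argument realises this via Alexander's theorem on extremal affine embeddings of bounded convex circled domains into Euclidean balls, which effectively forces the optimal embedding to agree, up to a unitary transformation, with the anisotropic product construction used in the lower bound.
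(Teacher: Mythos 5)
Your lower bound is correct and complete: the weighted product $F=(c_1f_1,\dots,c_mf_m)$ of extremal embeddings with $c_i=t/s_i$, $t=(\sum_i s_i^{-2})^{-1/2}$, is exactly the right construction, and the optimization of $\min_i c_is_i$ under $\sum_i c_i^2=1$ is elementary. (For context: the paper itself gives no proof of this theorem at all---it is quoted from Kubota---so you are attempting strictly more than the text does.) One slip in the setup of the second half: since $B^{n_i}\subset D_i$ in the Harish--Chandra realization, one has $\|v\|_{D_i}\le\|v\|$, so the quantity tied to $s_i$ is $\inf_{v\ne 0}\|v\|_{D_i}/\|v\|$, not $\inf_{v\ne 0}\|v\|/\|v\|_{D_i}$ (the latter infimum equals $1$); moreover the identification of this ratio with the squeezing constant is itself the content of Kubota's Theorem 1, not a definition.

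The genuine gap is the upper bound, and it is not just unfinished bookkeeping: the ingredients you assemble provably cannot yield the sharp inequality. What you actually have is (a) the two-sided estimate $r\max_i\|v_i\|_{D_i}\le\|dF(0)v\|\le\max_i\|v_i\|_{D_i}$, (b) the phase-averaging identity $\sum_i\|A_iv_i\|^2=\int\|dF(0)(e^{i\theta_1}v_1,\dots,e^{i\theta_m}v_m)\|^2\,d\theta$, and (c) extremal directions with $\|v_i\|=s_i^{-1}\|v_i\|_{D_i}$. Normalizing $\|v_i\|_{D_i}=1$, (a) and (b) give only $\|A_iv_i\|\ge r$ and $\sum_i\|A_iv_i\|^2\le 1$, hence $r\le m^{-1/2}$, which is strictly weaker than $(\sum_i s_i^{-2})^{-1/2}$ as soon as some $s_i<1$. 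To reach the sharp bound you would need $\|A_iv_i\|\ge r\|v_i\|=r/s_i$ along extremal directions, and this does not follow from (a)--(c): take the test case $D_1=\Delta$, $D_2=\sqrt{2}\,B^{n_2}$, which satisfies every structural hypothesis your sketch uses (convex, circled, Carath\'eodory norm equal to the Minkowski functional, extremal directions of ratio ``$s_2$''$=1/\sqrt{2}$), yet the block-diagonal linear map $\mathrm{diag}(1/\sqrt{2},\,\tfrac12 I)$ squeezes this product to $r=1/\sqrt{2}>1/\sqrt{3}$. So any correct proof must invoke finer structure of the classical domains than circularity plus extremal directions---in Kubota's argument this is the maximal-polydisc structure together with Alexander's theorem, which concerns polydiscs specifically rather than general convex circled domains---and this is precisely the step you delegate to ``Kubota's original argument'' with ``should follow.'' As written, the hard half of the theorem is asserted, not proved.
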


\begin{rem}In the 1980s, Y. Kubota considered the following Carath\'{e}odory extremal problem \cite{Kubota0,Kubota,Kubota1}: \begin{equation}\label{Kubota extremal problem}M(z_0,D)=\sup_{F\in \mathfrak{F}(D)}\left|J_F(z_0)\right|\qquad(z_0\in D),\end{equation} where $D$ is a bounded domain in the complex Euclidean space $\mathbb{C}^n$ and $\mathfrak{F}(D)$ consists of all holomorphic mappings from $D$ into the unit ball $B^n$ in $\mathbb{C}^n$, and $J_F$ is the Jacobian of $F$.

He proved that the extremal mapping of the extremal problem (\ref{Kubota extremal problem}) is unique up to a unitary linear transformation when $D$ is a bounded symmetric domain(including two exceptional cases) \cite{Kubota1}.
We observe that the extremal mappings are exact the extremal embedding from bounded symmetric domains into the unit ball.
Take $D_I(r,s)$ here for example, we can find from Kubota's proof that the extremal mapping is
$f(z)=z/\sqrt{r},~~ z=(z_{11},\cdots,z_{1s},z_{21},\cdots,z_{rs})\in \mathbb{C}^{rs}$, which is exact an extremal embedding for the squeezing function $s(D_I(r,s))=  r^{-1/2}$ since one knows $B_{n}\subset D_I(r,s)\subset \sqrt{r}B^{n},~~(n=rs)$.

When $D$ is a complex ellipsoid in $\mathbb{C}^n$, i.e.
\[D=D(k_1,\cdots, k_n)=\big\{z\in \mathbb{C}^n:~~\sum_{j=1}^{n}|z_j|^{k_j}<1 \big\},\] where $k_j (j=1,2,\cdots,n)$ are positive real numbers,
Ma considered the extremal problem (\ref{Kubota extremal problem}) in 1997 \cite{Ma}. It is proved that the extremal mapping is again linear, and we conjecture it is likely the extremal embedding for squeezing function $s_{D}(z)$ in this case. Therefore, it will be interesting to consider relations in general between squeezing function $s_D$ on a bounded domain $D$ and the Carath\'{e}odory maps from $D$ into the unit ball, especially when $D$ is homeomorphic to a cell.
\end{rem}

\end{document}